\newtheorem{acknowledgement}{Acknowledgement}
\newtheorem{thm}{Theorem}[section]
\newtheorem{lem}{Lemma}[section]
\newtheorem{prop}{Proposition}[section]
\newtheorem{defn}{Definition}[section]
\newtheorem{rem}{Remark}[section]
\newcommand{\norm}[1]{\left\Vert#1\right\Vert}
\newcommand{\abs}[1]{\left\vert#1\right\vert}
\newcommand{\Real}{\mathbb R}
\newcommand{\pfrac}[2]{\frac{\partial #1}{\partial #2}}
\newcommand{\wc}{\rightharpoonup}
\begin{document}
\title{Partial regularity of a minimizer of the relaxed energy for biharmonic maps}

\numberwithin{equation}{section}
\author{Min-Chun Hong and Hao Yin}

\address{Min-Chun Hong, Department of Mathematics, The University of Queensland\\
Brisbane, QLD 4072, Australia}  \email{hong@maths.uq.edu.au}

\address{Hao Yin, Department of Mathematics, The University of Queensland\\
Brisbane, QLD 4072, Australia and Department of Mathematics, Shanghai Jiaotong University, Shanghai, China}
\email{haoyin@sjtu.edu.cn}

\begin{abstract}
    In this paper, we study the relaxed energy for biharmonic maps from a $m$-dimensional domain into spheres. By an approximation method, we prove the existence of
    a minimizer of the relaxed energy of the Hessian energy,
    and that  the minimizer is biharmonic and smooth outside a singular set $\Sigma$ of finite $(m-4)$-dimensional Hausdorff measure.
     Moreover, when $m=5$, we   prove that the singular set $\Sigma$ is $1$-rectifiable.
\end{abstract}
\subjclass{AMS 35J48, 49J45} \keywords{relaxed energy, biharmonic
maps, partial regularity }

 \maketitle

\pagestyle{myheadings} \markright {} \markleft { }

\section{Introduction}

Let $\Omega$ be a bounded smooth domain in $\Real^m$ and $N$ a
compact manifold without boundary, which is  embedded in
$\Real^k$. For a map $u\in W^{2,2}(\Omega, N)$, we define its
Hessian energy by
\begin{equation}
    {\mathbb H}(u)=\int_{\Omega}\abs{\triangle u}^2 dx.
    \label{eqn:Henergy}
\end{equation}
 A  critical point of the
 Hessian energy functional in  $W^{2,2}(\Omega,N)$ is called a biharmonic map.

The partial regularity for stationary biharmonic maps has
attracted much attention. Motivated by the partial regularity
result for stationary harmonic maps (\cite {Be}), Chang, Wang and
Yang  in \cite{CWY}  introduced a study  of stationary biharmonic
maps and proved partial regularity of stationary biharmonic maps
into spheres. Wang in \cite{W3}  generalized  their result for
stationary biharmonic maps into a compact manifold $N$. Recently,
the regularity problem for stationary biharmonic maps was
revisited by Struwe  in \cite{Struwe}  from a new point of view.
Typical stationary biharmonic maps are minimizing biharmonic maps.
 The first author  and Wang
 in \cite{HW} proved that the Hausdorff dimension of the singular set of minimizing biharmonic maps into spheres is at most
 $m-5$. Recently, Scheven in \cite {Scheven} generalized the result for minimizing biharmonic maps into a
 general manifold $N$. This is an analogous result to the optimal partial regularity for
minimizing harmonic maps due to Giaquinta-Giusti  \cite{GG}  and
Schoen-Uhlenbeck  \cite{SU}.

On the other hand,  motivated by a gap phenomenon for the
Dirichlet energy discovered by Hardt-Lin (\cite{HL}),   Bethuel,
Brezis and Coron in \cite{BBC}  introduced  a relaxed energy for
the Dirichlet energy of maps in $W^{1,2}(B^3, S^2)$ and proved
that a minimizer of the relaxed energy is a  harmonic map.
Giaquinta, Modica and Soucek in \cite {GMS1} proved the partial
regularity of the minimizers of the relaxed energy for harmonic
maps.    A similar gap phenomenon for Hessian energy functional to
the one for the Dirichlet energy was observed in   \cite{HW}.
More precisely, there is a smooth domain $\Omega$ in $\Real^5$ and
a boundary value map $\psi:\partial\Omega\to S^4$ such that
\begin{equation*}
    \min_{u\in W^{2,2}_{\psi}(\Omega,S^4)}{\mathbb H}(u)< \inf_{v\in W^{2,2}_\psi(\Omega,S^4)\cap C^0(\bar{\Omega},S^4)} {\mathbb H}(v).
\end{equation*}
 Following  the  context of harmonic maps  (see \cite {BB}),
 a family of $\lambda$-relaxed
  energy functionals for bi-harmonic maps  was considered  in \cite{HW}  in the
  following:
\begin{equation*}
    {\mathbb H}_\lambda(u)={\mathbb H}(u)+16 \lambda \sigma_4 L(u), \quad \forall u\in W^{2,2}_\phi(\Omega,S^4) \mbox{ and } \lambda\in [0,1],
\end{equation*}
where $\sigma_4$ is the area of the unit sphere $S^4\subset
\Real^5$ and
\begin{equation*}
    L(u)=\frac{1}{\sigma_4}\sup_{\xi:\Omega\to \Real, \norm{\nabla \xi}_{L^\infty}\leq 1}
    \left\{
    \int_{\Omega} D(u)\cdot \nabla \xi dx-\int_{\partial \Omega}D(u)\cdot\nu\xi dH^{n-1}
    \right\}
\end{equation*}
for the $D$-field    $D(u)$. Moreover, it was proved in \cite{HW}
that ${\mathbb H}_\lambda$ are sequentially lower semi-continuous
and that their minimizers are partially regular biharmonic maps
for $\lambda\in [0,1)$. However, it is not known whether ${\mathbb
H}_1(u)$ is a relaxed energy for the Hessian functional or not.
Thus, there is an open question on the existence and partial
regularity of minimizers of the relaxed energy for biharmonic
maps.

In order to define a relaxed energy for biharmonic maps,  we
denote by $W_{u_0}^{2,2}(\Omega,S^n)$ the set of all maps $u\in
W^{2,2}(\Omega,S^n)$  satisfying the boundary condition
\begin{equation}\label{eqn:boundary}
    u-u_0|_{\partial \Omega}=0,\quad \nabla(u-u_0)|_{\partial\Omega}=0,
\end{equation}
where $u_0$ is smooth on $\overline{\Omega}$.  Similarly, we
denote by $C^\infty_{u_0}(\Omega, S^n)$ the space of smooth maps
satisfying (\ref{eqn:boundary}). Following a strategy in \cite
{GMS2}, we can define the relaxed energy $F(u)$ of biharmonic maps
in an abstract way; i.e.
\begin{defn}\label{defn:F}
    For each $u\in W^{2,2}_{u_0}(\Omega,S^n)$, we define the relaxed energy $F(u)$ by
    \begin{equation*}
        F(u)=\inf\left\{ \liminf_{k\to \infty} {\mathbb H}(u_k) | \quad \{u_k\}\subset C^\infty_{u_0}(\Omega,S^n),
        u_k\wc u \mbox{ weakly in } W^{2,2}(\Omega,S^n) \right\}.
    \end{equation*}
\end{defn}
 It can be proved  (see below Lemmas 2.1-2.2) that there is a minimizer of $F$ in
$W^{2,2}_{u_0}(\Omega, S^n)$  and
\begin{equation}\label{eqn:relax}
    \min_{u\in W^{2,2}_{u_0}(\Omega,S^n)}F(u) =\inf_{u\in W^{2,2}_{u_0}(\Omega,S^n)\cap C^0(\bar{\Omega},S^n)} {\mathbb
    H}(u).
\end{equation}
However, without the  explicit form of $F(u)$, we do not know how
to prove the partial regularity of a minimizer of $F$. To overcome
this difficulty, we consider a family of perturbed functionals
$\mathbb H_\varepsilon(\varepsilon>0)$ defined by
\begin{defn} For each $\varepsilon >0$, we define
    the perturbed functional ${\mathbb H}_\varepsilon: W^{2,2}_{u_0}\cap W^{1,m+1}(\Omega,S^n)\to \Real$ by
    \begin{equation*}
        {\mathbb H}_\varepsilon(u)=\int_{\Omega} \abs{\triangle u}^2+\varepsilon \abs{\nabla u}^{m+1}
        dx.
    \end{equation*}
\end{defn}
The similar approximation for the relaxed energy for harmonic maps
was recently studied by Giaquinta and the two authors in \cite {GHY}.

 The
first result of this paper is:
\begin{thm}\label{thm:first}
For each   $\varepsilon >0$, there exists  a minimizer
$u_\varepsilon$  of $\mathbb H_\varepsilon$ in the space
$W^{2,2}_{u_0}\cap W^{1,m+1}(\Omega, S^n)$. Then,  for each
sequence $\varepsilon\to 0$, there is
   a subsequence $\varepsilon_i$ such that
   $u_{\varepsilon_i}$ converges to a map $u$ weakly in $W^{2,2}(\Omega,
     S^n)$ and
     $u$ is a minimizer of the relaxed energy $F$ in $W_{u_0}^{2,2}((\Omega, S^n)$
      and  a biharmonic map.
     Moreover, the minimizer $u$ is smooth outside a relatively closed singular  set $\Sigma$, whose  $(m-4)$-Hausdorff
     measure is  finite,
     defined by
    \begin{equation*}
        \Sigma=\bigcap_{R>0} \left\{x\in \Omega\, |\quad B_R(x)\subset \Omega,\,
        \liminf_{\varepsilon_i\to 0} R^{4-m}\int_{B_R(x)} \abs{\triangle u_ {\varepsilon_i}}^2 dx
        \geq \varepsilon_0\right\}
    \end{equation*}
    for some constant $\varepsilon_0>0$.
\end{thm}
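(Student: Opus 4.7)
\emph{Existence of $u_\varepsilon$ and extraction of a weak limit.} For fixed $\varepsilon>0$ I would apply the direct method on $W^{2,2}_{u_0}\cap W^{1,m+1}(\Omega,S^n)$: $\mathbb{H}_\varepsilon$ is coercive on this space, and since $m+1>m$ the Morrey embedding gives uniform $C^{0,\alpha}$ bounds on any minimizing sequence, so the constraint $|u|=1$ passes to the weak limit; lower semicontinuity of each summand then delivers a minimizer $u_\varepsilon$. To pass $\varepsilon\to 0$ I would compare $u_\varepsilon$ with a fixed smooth map $w\in C^\infty_{u_0}(\Omega,S^n)$ (which exists since $u_0$ is smooth on $\overline{\Omega}$) to get the uniform bound $\mathbb{H}(u_\varepsilon)+\varepsilon\int_\Omega|\nabla u_\varepsilon|^{m+1}\,dx\le C$. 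This yields $u_{\varepsilon_i}\wc u$ weakly in $W^{2,2}$ along a subsequence, with $|u|=1$ a.e.\ after strong $L^2$ convergence.

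\emph{Minimality for $F$ and biharmonicity.} For any competitor $u'\in W^{2,2}_{u_0}(\Omega,S^n)$ I would take a smooth sequence $\{w_k\}\subset C^\infty_{u_0}$ with $w_k\wc u'$ and $\mathbb{H}(w_k)\to F(u')$; minimality of $u_{\varepsilon_i}$ gives $\mathbb{H}(u_{\varepsilon_i})\le\mathbb{H}_{\varepsilon_i}(w_k)=\mathbb{H}(w_k)+\varepsilon_i\int|\nabla w_k|^{m+1}\,dx$, so after $i\to\infty$ and $k\to\infty$ one obtains $\liminf_i\mathbb{H}(u_{\varepsilon_i})\le F(u')$. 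Conversely, using density of $C^\infty_{u_0}$ in $W^{2,2}_{u_0}\cap W^{1,m+1}$ (again because $m+1>m$) I would approximate each $u_{\varepsilon_i}$ by smooth maps $v_{i,k}$ in $W^{2,2}$, diagonalize to obtain $v_i\wc u$ weakly with $\liminf\mathbb{H}(v_i)\le\liminf\mathbb{H}(u_{\varepsilon_i})$, hence $F(u)\le\liminf\mathbb{H}(u_{\varepsilon_i})\le F(u')$, so $u$ minimizes $F$. For biharmonicity, the constrained Euler--Lagrange equation of $\mathbb{H}_\varepsilon$ is
\begin{equation*}
    \Delta^2 u_\varepsilon-\varepsilon\,\mathrm{div}\bigl(|\nabla u_\varepsilon|^{m-1}\nabla u_\varepsilon\bigr)\perp T_{u_\varepsilon}S^n.
\end{equation*}
The estimate $\bigl\|\varepsilon|\nabla u_\varepsilon|^{m-1}\nabla u_\varepsilon\bigr\|_{L^{(m+1)/m}}^{(m+1)/m}=\varepsilon^{1/m}\cdot\varepsilon\int|\nabla u_\varepsilon|^{m+1}\to 0$ erases the perturbation in the distributional limit. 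Strong $W^{2,2}_{\mathrm{loc}}$ convergence needed to pass the nonlinear Lagrange-multiplier term to $u$ is supplied by the local Morrey decay of the next step.

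\emph{Partial regularity and Hausdorff estimate.} I would follow the scheme for stationary biharmonic maps of Chang--Wang--Yang and Hong--Wang, adapted to the $\mathbb{H}_\varepsilon$-approximation: establish an $\varepsilon$-regularity statement asserting that if $R^{4-m}\int_{B_R(x)}|\Delta u_{\varepsilon_i}|^2\,dx<\varepsilon_0$ uniformly in $i$, then $u$ is smooth on $B_{R/2}(x)$. This requires a Morrey-decay iteration on the perturbed equation together with a monotonicity-type inequality for $u_{\varepsilon_i}$ derived from minimality via interior comparison with harmonic-type extensions. By definition, $\Sigma$ is then precisely the complement of the regular set. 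Finally, $\mathcal{H}^{m-4}(\Sigma)<\infty$ comes from a Vitali covering of $\Sigma$ by balls on which $\int_{B_R(x)}|\Delta u_{\varepsilon_i}|^2\,dx\geq\varepsilon_0 R^{m-4}$ for $i$ large; summing against the uniform bound on $\mathbb{H}(u_{\varepsilon_i})$ gives $\mathcal{H}^{m-4}(\Sigma)\le C\varepsilon_0^{-1}$.

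\emph{Main obstacle.} The principal difficulty I anticipate is the $\varepsilon$-uniform regularity. The term $\varepsilon|\nabla u_\varepsilon|^{m+1}$ is supercritical relative to the natural scaling of $\mathbb{H}$, and any Pohozaev or scaling identity will produce error terms one can only absorb by exploiting both the minimality of $u_\varepsilon$ and a careful choice of cutoffs, without introducing $\varepsilon^{-1}$ factors. A secondary subtlety is confirming that the minimizer identity $F(u)=\lim\mathbb{H}_{\varepsilon_i}(u_{\varepsilon_i})$ is compatible with local variations preserving the abstract defect captured by $F-\mathbb{H}$, so that the Euler--Lagrange computation at the $\mathbb{H}_\varepsilon$ level transfers to biharmonicity of $u$.
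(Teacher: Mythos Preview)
Your overall architecture is reasonable, but two steps have genuine gaps compared with how the paper actually closes them.

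\textbf{Biharmonicity of the limit.} You write the constrained Euler--Lagrange equation for $\mathbb H_\varepsilon$ and propose to pass to the limit using ``strong $W^{2,2}_{\mathrm{loc}}$ convergence \ldots\ supplied by the local Morrey decay of the next step.'' This is circular: the Morrey decay in the next step is obtained by applying the $\varepsilon$-regularity theory for \emph{biharmonic} maps to $u$, which presupposes that $u$ is already biharmonic. The paper avoids this loop entirely. For sphere-valued maps the Euler--Lagrange system can be rewritten in the wedge/divergence form
\[
2\,\Delta\bigl(\nabla\!\cdot(\nabla u_\varepsilon\times u_\varepsilon)\bigr)-4\,\nabla\!\cdot(\Delta u_\varepsilon\times\nabla u_\varepsilon)
-\varepsilon(m{+}1)\,\nabla\!\cdot\bigl(|\nabla u_\varepsilon|^{m-1}\nabla u_\varepsilon\times u_\varepsilon\bigr)=0,
\]
and in this form each nonlinear term is a product of one factor converging weakly in $L^2$ with another converging strongly in $L^2$; together with $\varepsilon\!\int|\nabla u_\varepsilon|^{m+1}\to 0$ this lets one pass to the distributional limit using only weak $W^{2,2}$ convergence. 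No strong $W^{2,2}$ convergence is needed at this stage.

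\textbf{Monotonicity and $\varepsilon$-regularity.} You propose to derive a monotonicity-type inequality ``from minimality via interior comparison with harmonic-type extensions.'' Comparison arguments yield energy bounds but not the precise scaling identity one needs here; the paper instead uses \emph{inner variations} (domain deformations $\xi(x)=\psi(|x|/\tau)x$) to obtain an explicit monotonicity formula for $u_\varepsilon$, with a positive bulk term $P$, a boundary term $R$ of undetermined sign, and an extra term $Q=\varepsilon(3{-}m)\int_\rho^r\tau^{3-m}\int_{B_\tau}|\nabla u_\varepsilon|^{m+1}$ coming from the mismatch in scaling of the two pieces of $\mathbb H_\varepsilon$. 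The supercritical obstacle you correctly anticipate is exactly this $Q$; it is \emph{not} absorbed by cutoffs but is shown to vanish as $\varepsilon_i\to 0$ at Lebesgue points of an auxiliary $L^1$ function built from a rapidly converging subsequence. The boundary term $R$ is handled by Struwe's good-slice argument: one chooses radii where boundary integrals are controlled by bulk integrals and passes to the limit in $i$. The output is a Morrey bound for $u$ itself (not for $u_\varepsilon$), namely $\|\nabla^2 u\|_{L^{2,m-4}}+\|\nabla u\|_{L^{4,m-4}}\le C\varepsilon_0$ on a smaller ball, and only then does one invoke the regularity theory for biharmonic maps. Your plan of a ``Morrey-decay iteration on the perturbed equation'' would require a uniform-in-$\varepsilon$ small-energy regularity for $\mathbb H_\varepsilon$-minimizers, which the paper neither proves nor needs.
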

 It is well known that one of main difficulties in the proof of partial regularity of stationary biharmonic maps is
  that the monotonicity formula for biharmonic maps involves boundary terms of undetermined sign. Chang, Wang and Yang \cite{CWY}
  used a complicated iteration to deal with this difficulty. Struwe in \cite{Struwe} had a nice observation  and gave a simple proof, on which
  our proof to Theorem \ref{thm:first} is based.
  In fact, our proof to Theorem \ref{thm:first} is more complicated, since the limit $u$ of $u_\varepsilon$ is not stationary, so there is no
  `nice' monotonicity formula for $u$.
  Our approach is to prove a monotonicity formula for $u_\varepsilon$ and pass a limit of $\varepsilon\to 0$.

In Section \ref{sec:further}, we study further properties of the
boundary terms in the monotonicity formula. In particular, we show
that for $\mathcal H^{m-4}$ a.e. $x\in \Omega$, the quantity
\begin{equation*}
   \Theta (x)= \lim_{r\to 0} r^{4-m} \mu(B_r(x))
\end{equation*}
exists, where $\mu(B_r(x))=\lim_{\varepsilon\to
0}\int_{B_r(x)}\abs{\triangle u_\varepsilon}^2 dx$. This is an
interesting feature of the monotonicity formula for biharmonic
maps. Namely, although the boundary term of unknown sign spoils
the monotonicity of the scaled energy, the limit of the scaled
energy exists. Our proof also works for a sequence of stationary
biharmonic maps into any compact manifold. Thanks to a result of
Preiss \cite{Big}, we have
\begin{thm}\label{thm:second}
    Let $ \tilde u_i$ be a sequence of stationary biharmonic maps from $\Omega\subset \Real^m$ into a compact manifold $N\subset \Real^k$.
    Assume that $ \tilde u_i$ converges weakly to a map
    $ \tilde u$ in
    $W^{2,2}$ and
    \begin{equation*}
       \tilde \mu=\lim_{i\to \infty} \abs{\triangle  \tilde u_i}^2 dx = \abs{\triangle  \tilde u}^2 + \tilde \nu.
    \end{equation*}
  It can be shown  that  $ \tilde u_i$ converges smoothly
    to
    $ \tilde u$ in
    $\Omega \backslash   \tilde{\Sigma}$, where
\begin{equation*}
    \tilde{\Sigma}=\left \{x\in \Omega\,|\quad \liminf_{\rho\to 0} \left( \rho^{2-m}\int_{B_\rho(x)}\abs{\nabla \tilde{u}}^2 dx
    +\rho^{4-m}\tilde{\mu} (B_\rho(x)) \right)\geq \varepsilon_0\right \}
\end{equation*}
for  a positive constant  $\varepsilon_0$ (see Section 3 in
\cite{Scheven}).   Then, $ \tilde \nu$ is a $(m-4)-$rectifiable
measure and the singular set  $\tilde{\Sigma}$ is
$(m-4)-$rectifiable.
\end{thm}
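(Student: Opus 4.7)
\emph{Proof plan.}
The plan is to apply Preiss's theorem: a Radon measure $\mu$ on $\mathbb{R}^m$ is $k$-rectifiable if and only if for $\mu$-a.e.\ $x$ the $k$-density $\Theta^k(\mu,x)=\lim_{\rho\to 0}\rho^{-k}\mu(B_\rho(x))$ exists and is both positive and finite. We apply this with $k=m-4$ to the defect measure $\tilde{\nu}$, and then deduce rectifiability of $\tilde{\Sigma}$ by matching it against the support of $\tilde{\nu}$ up to an $\mathcal{H}^{m-4}$-null set.

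The central step is the existence of $\Theta^{m-4}(\tilde{\nu},x)$. From the analysis in Section \ref{sec:further}, for $\mathcal{H}^{m-4}$-a.e.\ $x\in\Omega$ the limit $\Theta(x)=\lim_{\rho\to 0}\rho^{4-m}\tilde{\mu}(B_\rho(x))$ exists: although the biharmonic monotonicity formula contains boundary terms of indefinite sign, these terms integrate in $\rho$ to yield an absolutely convergent correction, so that the scaled energy differs from a monotone quantity by an error whose limit exists almost everywhere. Because $|\triangle\tilde u|^2\in L^1(\Omega)$, a standard Vitali covering argument shows that the absolutely continuous measure $|\triangle\tilde u|^2\,dx$ has zero $(m-4)$-density at $\mathcal{H}^{m-4}$-a.e.\ point, so subtraction gives $\Theta^{m-4}(\tilde{\nu},x)=\Theta(x)$ existing $\mathcal{H}^{m-4}$-a.e. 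Finiteness follows from the uniform upper bound on $\rho^{4-m}\tilde{\mu}(B_\rho(x))$ given by the quasi-monotone quantity; together with the support statement below this upgrades to $\tilde{\nu}\ll\mathcal{H}^{m-4}$ locally, so the density exists $\tilde{\nu}$-a.e.

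For positivity, $\tilde{\nu}$ is supported in $\tilde{\Sigma}$ since $\tilde{u}_i\to\tilde{u}$ smoothly on $\Omega\setminus\tilde{\Sigma}$. Standard partial regularity for (weak limits of) stationary biharmonic maps shows that $\tilde{u}$ is smooth at $\mathcal{H}^{m-4}$-a.e.\ point of $\tilde{\Sigma}$; at such a point both $\rho^{2-m}\int_{B_\rho(x)}|\nabla\tilde u|^2\,dx$ and $\rho^{4-m}\int_{B_\rho(x)}|\triangle\tilde u|^2\,dx$ tend to $0$, so the defining inequality $\liminf(\rho^{2-m}\int|\nabla\tilde u|^2+\rho^{4-m}\tilde{\mu}(B_\rho))\geq\varepsilon_0$ is inherited by $\tilde{\nu}$ and yields $\Theta^{m-4}(\tilde{\nu},x)\geq c\,\varepsilon_0>0$. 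Preiss's theorem then gives $\tilde{\nu}=\theta\,\mathcal{H}^{m-4}$ restricted to a countably $(m-4)$-rectifiable Borel set $E$. Since $\tilde{\Sigma}\setminus E$ is contained in the union of the singular set of $\tilde{u}$ and the $\mathcal{H}^{m-4}$-null set where $\tilde{\nu}$ has zero density, and both pieces are $\mathcal{H}^{m-4}$-null, $\tilde{\Sigma}$ is itself $(m-4)$-rectifiable.

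The main obstacle is the existence of the limit $\Theta(x)$: the boundary term of indefinite sign in the biharmonic monotonicity formula must be controlled quantitatively at small scales, and this is the technical content of Section \ref{sec:further}. Once $\Theta(x)$ is in hand, the rectifiability of both $\tilde{\nu}$ and $\tilde{\Sigma}$ follows from a routine combination of measure-theoretic differentiation and a direct appeal to Preiss's theorem.
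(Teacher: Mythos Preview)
Your proposal is correct and follows essentially the same route as the paper: invoke Scheven's results for the support and the absolute continuity $\tilde\nu\ll\mathcal H^{m-4}\llcorner\tilde\Sigma$, use the analysis of Section~\ref{sec:further} (the analogue of Theorem~\ref{thm:15}) to obtain the existence of $\lim_{\rho\to 0}\rho^{4-m}\tilde\mu(B_\rho(x))$ for $\mathcal H^{m-4}$-a.e.\ $x$, and then apply Preiss's theorem. Your write-up is somewhat more detailed on the positivity of the density and on passing from $\tilde\mu$ to $\tilde\nu$, but the strategy and the key inputs are identical to the paper's.
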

An analogous rectifiable result on the concentration set of
stationary harmonic maps was established by Lin \cite {Lin}. A
similar result was also obtained by Tian \cite {Tian} for
Yang-Mills equations.

A difference between a sequence of stationary biharmonic maps
$\tilde u_i$ and the sequence $u_{\varepsilon_i}$ in Theorem 1.1
is that  $\tilde u_i$   converges smoothly to $\tilde u$ away from
the concentration set $\tilde \Sigma$, but it is hard to prove a
similar result for the sequence $u_{\varepsilon_i}$ in Theorem
1.1. As a consequence of this result for stationary harmonic maps,
the limiting defect measure $\tilde \nu$ is supported in the
energy concentration set $\tilde \Sigma$. However, for the
sequence $u_\varepsilon$ in Theorem 1.1, this is not obvious at
all. To overcome the difficulty,  only for $m=5$, we can adapt an
idea of Lin \cite{Acta} to prove that the $u_{\varepsilon_i}$
converges strongly in $u$ in $W^{2,2}(\Omega \backslash \Sigma_1
)$ (see below). More precisely, we have
\begin{thm}\label{thm:third} Let $u_{\varepsilon_i}$ be a
minimizer of $\mathbb H_{\varepsilon_i}$ in Theorem 1.1 and
\begin{equation*}
    \mu=\lim_{\varepsilon_i\to 0} \abs{\triangle u_{\varepsilon_i}}^2 dx =\abs{\triangle u}^2 dx +\nu,
\end{equation*}
for a measure $\nu\geq 0$. When $m=5$, we have:

    (1) There is a small positive constant $\varepsilon_1 <\varepsilon_0$  such that if
    \begin{equation*}
        \Sigma_1=\bigcap_{R>0}\left\{ x\in \Omega|\quad B_R(x)\subset \Omega,\, R^{-1}\mu(B_R(x))\geq \varepsilon_1\right\},
    \end{equation*}
    then $\Sigma_1$ is a relatively closed set of finite $1-$dimension Hausdorff measure and
    \begin{equation*}
        \Sigma_1=\mbox{spt}\, \nu\cup \mbox{sing}\, u.
    \end{equation*}

    (2) For $\mathcal H^1-$a.e. $x\in \Sigma_1$, $\nu=\Theta(x)\mathcal H^1\llcorner \Sigma_1$ and $\varepsilon_1\leq \Theta(x)\leq C(d(x,\partial \Omega))$, where $C(d(x,\partial\Omega))$ is a constant depending on the distance from $x$ to $\partial \Omega$.

    (3) The defect measure $\nu$ is $1$-rectifiable measure and hence $\Sigma_1$ is a $1$-rectifiable set.
\end{thm}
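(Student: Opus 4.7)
My plan is to treat the three parts in order, the central new ingredient being a strong $W^{2,2}$ convergence lemma away from $\Sigma_1$ that substitutes for the smooth convergence available in the stationary setting of Theorem \ref{thm:second}. For Part (1), closedness of $\Sigma_1$ follows from the upper semicontinuity of $x\mapsto R^{-1}\mu(B_R(x))$, and $\mathcal{H}^1(\Sigma_1)\leq C\varepsilon_1^{-1}\mu(\Omega)<\infty$ by a Vitali covering applied to balls on which $R^{-1}\mu(B_R)\geq\varepsilon_1$, using the uniform bound $\mu(\Omega)<\infty$. Choosing $\varepsilon_1\leq\varepsilon_0$ yields $\mathrm{sing}\,u\subset\Sigma\subset\Sigma_1$ directly from Theorem \ref{thm:first}. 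The nontrivial content of the identity $\Sigma_1=\mathrm{spt}\,\nu\cup\mathrm{sing}\,u$ is the contrapositive: if $R^{-1}\mu(B_R(x))<\varepsilon_1$ at some scale $R$, then (by the lemma below) $u_{\varepsilon_i}\to u$ strongly in $W^{2,2}(B_{R/2}(x))$, so $u$ is smooth there and $\nu\llcorner B_{R/2}(x)=0$, forcing $x\notin\mathrm{spt}\,\nu\cup\mathrm{sing}\,u$.

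For Part (2), since $\Sigma_1$ has finite $\mathcal H^1$-measure in $\Real^5$, it has Lebesgue measure zero, so $\abs{\triangle u}^2 dx\llcorner\Sigma_1=0$. Combined with $\mathrm{spt}\,\nu\subset\Sigma_1$, this gives $\nu=\mu\llcorner\Sigma_1$. The existence of the density $\Theta(x)=\lim_{r\to 0}r^{-1}\mu(B_r(x))$ for $\mathcal H^1$-a.e.\ $x$, established in Section \ref{sec:further}, is the decisive input; together with the Besicovitch differentiation theorem it yields $\nu=\Theta\,\mathcal H^1\llcorner\Sigma_1$. The lower bound $\varepsilon_1\leq\Theta(x)$ on $\Sigma_1$ is immediate from the definition of $\Sigma_1$, and the upper bound $\Theta(x)\leq C(d(x,\partial\Omega))$ follows from the monotonicity formula for $u_{\varepsilon_i}$ on $B_r(x)$ with $r\leq\tfrac12 d(x,\partial\Omega)$, whose boundary contributions are controlled by $d(x,\partial\Omega)$ and the uniform energy bound.

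For Part (3), the positive finite $1$-density of $\nu$ on $\Sigma_1$ established in Part (2), combined with Preiss's theorem \cite{Big} applied exactly as in the proof of Theorem \ref{thm:second}, gives that $\nu$ is $1$-rectifiable, and hence that $\Sigma_1=\mathrm{spt}\,\nu\cup\mathrm{sing}\,u$ is $1$-rectifiable, using the finite $\mathcal H^1$-measure of $\mathrm{sing}\,u$.

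The main obstacle is the strong convergence lemma underlying Part (1). Because $u_{\varepsilon_i}$ are not stationary biharmonic, the monotonicity formula contains boundary terms of undetermined sign and a penalty term $\varepsilon_i\abs{\nabla u_{\varepsilon_i}}^{m+1}$ whose contribution must be shown to vanish. My plan is to adapt Lin's strategy from \cite{Acta}: at a scale where $R^{-1}\mu(B_R(x))<\varepsilon_1$, the $\varepsilon$-regularity underlying Theorem \ref{thm:first} provides uniform $C^\infty_{\mathrm{loc}}$ bounds on $u_{\varepsilon_i}$ on a slightly smaller ball, hence smooth convergence to $u$ there; combined with the uniform $W^{2,2}$ bound, this upgrades to strong $W^{2,2}$ convergence and eliminates the defect measure $\nu$ on the ball.
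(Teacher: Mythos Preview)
Your overall architecture for Parts (2) and (3) matches the paper's and is sound. The gap is in Part (1), specifically in your strong-convergence lemma.

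You assert that ``the $\varepsilon$-regularity underlying Theorem \ref{thm:first} provides uniform $C^\infty_{\mathrm{loc}}$ bounds on $u_{\varepsilon_i}$''. This is not what Theorem \ref{thm:first} gives. The proof of Theorem \ref{thm:first} passes the monotonicity formula for $u_{\varepsilon_i}$ to the limit, obtains a small Morrey bound for the \emph{limit} $u$, and then applies the biharmonic $\varepsilon$-regularity (Struwe) to $u$ alone. Nothing in that argument yields uniform $C^\infty$ control on the approximating maps $u_{\varepsilon_i}$; indeed the paper explicitly flags this as the key obstruction, remarking just before the statement of Theorem \ref{thm:third} that for the sequence $u_{\varepsilon_i}$ ``it is hard to prove'' smooth convergence away from the concentration set. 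Each $u_{\varepsilon_i}$ solves a perturbed Euler--Lagrange equation containing the term $\varepsilon_i(m+1)\nabla\cdot(\abs{\nabla u_{\varepsilon_i}}^{m-1}\nabla u_{\varepsilon_i})$, and no $\varepsilon$-regularity theorem uniform in $\varepsilon_i$ is established (nor is one expected, since the $p$-Laplacian piece degenerates differently).

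The paper's route, which is in fact what Lin does in \cite{Acta}, is not uniform $\varepsilon$-regularity but a \emph{comparison} argument exploiting that $u_{\varepsilon_i}$ is a minimizing sequence in $W^{2,2}\cap C^0$. One constructs a competitor $\tilde u_i$ equal to $u_i$ outside $B_\rho$, equal to the smooth limit $u$ on $B_{\rho(1-\eta)}$, and defined on the thin annulus by projecting a biharmonic extension $v_i$ onto $S^n$. The substantive work (Lemma \ref{lem:weneed}, Lemma \ref{lem:elliptic}, and the five-step proof of Proposition \ref{thm:2}) is showing $\abs{v_i}\geq 1/2$ so that the projection is well-defined; Step three uses Poisson-kernel estimates on $\partial B_\rho\subset\Real^5$, and this is precisely where the restriction $m=5$ enters. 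Your proposal bypasses all of this with an assumption that is unavailable, so as written it does not close.
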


The paper is organized as follows. In Section 2,   we establish a
monotonicity and partial regularity of  the minimizer $
u_{\varepsilon}$ of $\Bbb H_{\varepsilon}$ in Theorem 1.1. In
Section 3, we prove the quantity $\Theta (x)$ exists for $\mathcal
H^{m-4}$ a.e. $x\in \Omega$ and give a proof of Theorem 1.2. In
Section 4, we prove a strong convergence of the sequence
$\{u_{\varepsilon}\}$ away from a concentration set and finally
complete a proof of Theorem 1.3.

\section{Perturbed variational problem and the partial regularity}\label{sec:var}

Let $F(u)$ be the relaxed energy defined  in Definition
\ref{defn:F}. It is easy to see that the minimum of the relaxed
energy $F(u)$ is achieved in $W^{2,2}_{u_0}(\Omega,S^n)$.
\begin{lem}\label{lem:21}
    There exists $\bar{u}\in W^{2,2}_{u_0}(\Omega,S^n)$ such that
    \begin{equation*}
        F(\bar{u})=\inf_{u\in W^{2,2}_{u_0}(\Omega,S^n)} F(u).
    \end{equation*}
\end{lem}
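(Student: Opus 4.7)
The plan is to use the direct method of the calculus of variations on the functional $F$, combined with a diagonal selection to handle the two-level structure of the definition of $F$. First I would observe that $F(u) \geq \mathbb{H}(u)$ for every $u \in W^{2,2}_{u_0}(\Omega,S^n)$: this is immediate from the weak lower semi-continuity of $\mathbb{H}$ applied to any admissible smooth sequence in the definition of $F(u)$. Consequently, along any minimizing sequence $\{u_j\} \subset W^{2,2}_{u_0}(\Omega,S^n)$ with $F(u_j) \to \inf F$, we get a uniform bound on $\mathbb{H}(u_j)$, and together with the fixed boundary datum $u_0$ this gives a uniform $W^{2,2}$ bound. Passing to a subsequence, $u_j \wc \bar u$ weakly in $W^{2,2}$, and the boundary conditions \eqref{eqn:boundary} are preserved in the weak limit, so $\bar u \in W^{2,2}_{u_0}(\Omega,S^n)$; since $W^{2,2} \hookrightarrow L^2$ is compact, pointwise a.e.\ convergence along a further subsequence also gives $\bar u(x) \in S^n$ a.e.

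The key step is then to show $F(\bar u) \leq \inf F$. For each $j$, the definition of $F(u_j)$ gives a smooth sequence $\{u_{j,k}\}_k \subset C^\infty_{u_0}(\Omega,S^n)$ with $u_{j,k} \wc u_j$ in $W^{2,2}$ as $k \to \infty$ and $\liminf_{k\to\infty} \mathbb{H}(u_{j,k}) \leq F(u_j) + 1/j$. I would now perform a diagonal selection. Since $W^{2,2}$ is a separable Hilbert space, the weak topology restricted to any norm-bounded set is metrizable; let $d$ be such a metric on a large enough ball containing all the sequences we need. For each $j$, pick $k_j$ large enough that
\begin{equation*}
d(u_{j,k_j}, u_j) < \tfrac{1}{j}, \qquad \mathbb{H}(u_{j,k_j}) \leq F(u_j) + \tfrac{2}{j}.
\end{equation*}
Then $d(u_{j,k_j}, \bar u) \leq d(u_{j,k_j}, u_j) + d(u_j, \bar u) \to 0$, so $u_{j,k_j} \wc \bar u$ weakly in $W^{2,2}$. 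Since each $u_{j,k_j} \in C^\infty_{u_0}(\Omega,S^n)$, the sequence $\{u_{j,k_j}\}$ is admissible in the definition of $F(\bar u)$, hence
\begin{equation*}
F(\bar u) \leq \liminf_{j\to\infty} \mathbb{H}(u_{j,k_j}) \leq \liminf_{j\to\infty} \left(F(u_j) + \tfrac{2}{j}\right) = \inf F,
\end{equation*}
and combined with the reverse inequality $F(\bar u) \geq \inf F$ this yields the desired minimizer.

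The main obstacle I anticipate is the diagonalization: one must ensure that the chosen smooth approximants $u_{j,k_j}$ themselves converge weakly to $\bar u$ (not merely to $u_j$). The standard remedy, used above, is metrizability of the weak topology on bounded subsets of a separable Hilbert space, which reduces the two-parameter weak-convergence issue to a triangle-inequality estimate. The rest is routine: uniform $W^{2,2}$ bounds, weak compactness, and the trivial comparison $F \geq \mathbb{H}$.
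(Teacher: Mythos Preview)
Your proposal is correct and follows essentially the same approach as the paper: take a minimizing sequence $u_j$, pick for each $j$ a smooth approximant $u_{j,k_j}$ with Hessian energy close to $F(u_j)$, and pass to the weak limit of the diagonal sequence. The paper gives only a two-sentence sketch of this argument, whereas you have carefully justified the diagonalization via metrizability of the weak topology on bounded sets; this is exactly the detail the paper leaves implicit.
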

Let $u_i$ be a minimizing sequence of $F$. For each $u_i$, by definition, we can find a sequence of $u_{i,j}\in C_{u_0}^\infty$ such that $\lim_{j\to \infty} \mathbb H(u_{ij})$ can be arbitrarily close to $F(u_i)$. The proof of Lemma \ref{lem:21} follows from choosing a suitable $u_{i,k_i}$ for each $i$ and considering the weak limit of $u_{i,k_i}$.

However, we do not know how to prove that the minimizer given by
Lemma \ref{lem:21} is a biharmonic map. Instead, we start to
consider a perturbed functional $H_{\varepsilon}$ for $\varepsilon
>0$. The first observation is that
\begin{lem}\label{lem:approx}
    \begin{equation*}
        \inf_{W^{2,2}_{u_0}\cap C^0_{u_0}(\Omega,S^n)} {\mathbb H}(u)=\inf_{W^{2,2}_{u_0}\cap W^{1,m+1}(\Omega,S^n)} {\mathbb H}(u)=\inf_{C_{u_0}^\infty(\Omega,S^n)} {\mathbb H}(u).
    \end{equation*}
\end{lem}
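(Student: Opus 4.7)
First I would establish the easy direction $\inf_A\le \inf_B\le \inf_C$ (writing $A,B,C$ for the three classes in the order they appear in the statement) by two inclusions: the containment $C^\infty_{u_0}(\Omega,S^n)\subset W^{2,2}_{u_0}\cap W^{1,m+1}(\Omega,S^n)$ is immediate, and since $m+1>m$, Morrey's embedding $W^{1,m+1}(\Omega)\hookrightarrow C^{0,1/(m+1)}(\bar{\Omega})$ shows that any map in $W^{2,2}_{u_0}\cap W^{1,m+1}(\Omega,S^n)$ is already continuous up to the boundary, and so lies in $W^{2,2}_{u_0}\cap C^0_{u_0}(\Omega,S^n)$. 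The content of the lemma is then the reverse inequality $\inf_C\le \inf_A$: given an arbitrary $u\in W^{2,2}_{u_0}\cap C^0(\bar{\Omega},S^n)$, construct a sequence in $C^\infty_{u_0}(\Omega,S^n)$ on which $\mathbb H$ converges to $\mathbb H(u)$.

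The plan is to combine a boundary interpolation, an interior mollification, and a nearest-point projection. Fix a smooth extension of $u_0$ to a neighborhood of $\bar{\Omega}$, and for $\eta>0$ choose a cutoff $\chi_\eta\in C^\infty_c(\Omega)$ satisfying $\chi_\eta\equiv 1$ on $\Omega_{2\eta}:=\set{x\in\Omega\,:\, d(x,\partial\Omega)>2\eta}$, $\chi_\eta\equiv 0$ off $\Omega_\eta$, and $\abs{\nabla^k\chi_\eta}\le C\eta^{-k}$ for $k=1,2$. Set $v_\eta:=\chi_\eta u+(1-\chi_\eta)u_0$; since $v_\eta\equiv u_0$ in a neighborhood of $\partial\Omega$, the function $v_\eta-u_0$ has compact support in $\Omega$. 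For $0<\eta'\ll\eta$ I mollify by a standard kernel $\phi_{\eta'}$, setting $w_{\eta,\eta'}:=u_0+(v_\eta-u_0)*\phi_{\eta'}\in C^\infty_{u_0}(\Omega,\Real^{n+1})$. Continuity of $u-u_0$ together with $u-u_0=0$ on $\partial\Omega$ forces $\norm{u-u_0}_{L^\infty(\Omega_\eta\setminus\Omega_{2\eta})}\to 0$, so $v_\eta$, and hence $w_{\eta,\eta'}$ for $\eta'$ small enough (by uniform convergence of mollification on continuous functions), takes values in an arbitrarily small tubular neighborhood $U$ of $S^n$ on which the nearest-point projection $\Pi:U\to S^n$ is smooth. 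The approximation $u_{\eta,\eta'}:=\Pi\circ w_{\eta,\eta'}$ is then a smooth $S^n$-valued map still satisfying \eqref{eqn:boundary} exactly, hence an element of $C^\infty_{u_0}(\Omega,S^n)$.

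The main obstacle is to show $\mathbb H(u_{\eta,\eta'})\to\mathbb H(u)$ along a suitable diagonal sequence. Since $\Pi$ is $C^2$ with bounded derivatives on $U$, the chain rule gives $\abs{\triangle u_{\eta,\eta'}}\le C(\abs{\triangle w_{\eta,\eta'}}+\abs{\nabla w_{\eta,\eta'}}^2)$, and standard mollifier estimates reduce the problem to showing $v_\eta\to u$ strongly in $W^{2,2}(\Omega)$. Expanding,
\[
\triangle v_\eta-\triangle u = -(1-\chi_\eta)\triangle(u-u_0)+2\nabla\chi_\eta\cdot\nabla(u-u_0)+(\triangle\chi_\eta)(u-u_0),
\]
so the delicate contribution is the boundary layer
\[
\int_{\Omega_\eta\setminus\Omega_{2\eta}}\bigl(\eta^{-2}\abs{\nabla(u-u_0)}^2+\eta^{-4}\abs{u-u_0}^2\bigr)\,dx.
\]
Since $u-u_0\in W^{2,2}_0(\Omega)$, the iterated Hardy inequality provides $\norm{d^{-1}\nabla(u-u_0)}_{L^2}+\norm{d^{-2}(u-u_0)}_{L^2}\le C\norm{D^2(u-u_0)}_{L^2}$, where $d(x):=\mathrm{dist}(x,\partial\Omega)$. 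On the strip $d\le 2\eta$, the display above is dominated by a tail integral of an $L^1$ quantity and therefore tends to $0$ as $\eta\to 0$. This yields $v_\eta\to u$ in $W^{2,2}(\Omega)$; a diagonal choice $\eta'=\eta'(\eta)\to 0$ sufficiently fast then produces $u_{\eta,\eta'(\eta)}\in C^\infty_{u_0}(\Omega,S^n)$ with $\mathbb H(u_{\eta,\eta'(\eta)})\to\mathbb H(u)$, closing the chain of equalities. Packaging this Hardy-type boundary-strip estimate is the principal technical point.
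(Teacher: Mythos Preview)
Your argument is correct but proceeds differently from the paper. The paper exploits the fact that both $u-u_0$ and $\nabla(u-u_0)$ vanish on $\partial\Omega$ to note that the \emph{zero extension} of $u-u_0$ outside $\Omega$ remains in $W^{2,2}$; it then mollifies this extension, composes with a slight dilation (written out for $\Omega=B_1$) so that the mollified function vanishes identically near $\partial\Omega$, and finally projects to $S^n$. No Hardy inequality enters: the boundary layer is absorbed by the extend-by-zero-and-dilate construction. Your cutoff $\chi_\eta$ instead produces the strip terms $\eta^{-2}\abs{\nabla(u-u_0)}^2$ and $\eta^{-4}\abs{u-u_0}^2$, which you then control via the iterated Hardy inequality for $W^{2,2}_0(\Omega)$; this is more technical but has the merit of working directly on a general smooth $\Omega$ without the dilation trick. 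One small point to tighten: $W^{2,2}$ convergence of $v_\eta\to u$ alone does not force $\nabla v_\eta\to\nabla u$ in $L^4$ when $m\ge 5$, and your chain-rule bound contains the term $\abs{\nabla w_{\eta,\eta'}}^2$; however, the same Hardy control combined with the uniform smallness $\abs{u-u_0}\le\omega(\eta)$ on the strip gives $\int_{\{d\le 2\eta\}}\eta^{-4}\abs{u-u_0}^4\le C\omega(\eta)^2\to 0$, which closes that term as well.
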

\begin{proof}
    It is obvious that
    \begin{equation*}
        C^\infty_{u_0}(\Omega,S^n)\subset W^{2,2}_{u_0}\cap W^{1,m+1}(\Omega,S^n)\subset W^{2,2}_{u_0}\cap C^0_{u_0}(\Omega,S^n).
    \end{equation*}
    It suffices to show that for each $u\in W^{2,2}_{u_0}\cap C^0_{u_0}(\Omega,S^n)$, we can find a sequence of $u_k\in C^\infty_{u_0}(\Omega, S^n)$ such that
    \begin{equation*}
      \lim_{k\to \infty} \norm{u_k-u}_{W^{2,2}}=0.
    \end{equation*}
    For simplicity, let us assume $\Omega=B_1$. Define
    \begin{equation*}
     \left\{
     \begin{array}[]{ll}
       \tilde{u}=u-u_0 & \mbox{ for } x\in \overline{B_1} \\
       \tilde{u}=0 & \mbox{ for } x\in B_2\setminus \overline{B_1}.
     \end{array}
     \right.
    \end{equation*}
    Due to the boundary condition (\ref{eqn:boundary}), $\tilde{u}$ is in $W^{2,2}(B_2,\Real^{n+1})$. Let $\xi$ be a smooth function supported in $B_1(0)$ and satisfy
    \begin{equation*}
      \int_{\Real^m} \xi dx=1.
    \end{equation*}
    Set
    \begin{equation*}
      w_k(x)=\int_{\Real^m} k^m\xi(ky) \tilde{u}(x-y) dy
    \end{equation*}
    and
    \begin{equation*}
      \tilde{w}_k(x)=w_k( (1+\frac{2}{k}){x}).
    \end{equation*}
    By the definition of $\tilde{u}$ outside $B_1$ and the compact support of $\xi$, $\tilde{w}_k$ satisfies zero Dirichlet and Neumann boundary conditions on $\partial B_1$. It is obvious that
    \begin{equation*}
      \lim_{k\to \infty} \norm{\tilde{w}_k-\tilde{u}}_{W^{2,2}(B_1,\Real^{n+1})}=0.
    \end{equation*}
    We claim that $\tilde{w}_k$ converges to $\tilde{u}$ uniformly on $B_1$. In fact, $\tilde{u}( (1+\frac{2}{k})x)$ uniformly converges to $\tilde{u}(x)$ due to the uniform continuity of $u$ and $w_k(y)$ converges uniformly to $\tilde{u}(y)$ on $B_{3/2}$. We can now set
    \begin{equation*}
      u_k=\frac{\tilde{w}_k(x)+u_0}{\abs{\tilde{w}_k(x)+u_0}}.
    \end{equation*}
    It is straightforward to check that $u_k$ satisfies the boundary conditions (\ref{eqn:boundary}) and approaches $u$ in $W^{2,2}-$norm.
\end{proof}

As can be seen from the above proof in Lemma 2.2, we can
equivalently define $F(u)$ to be
    \begin{align*}
        F(u)=\inf \big\{ \liminf_{k\to \infty} {\mathbb H}(u_k) \,\,|\quad  \{u_k\}&\subset W^{2,2}_{u_0}\cap
        W^{1,m+1}(\Omega,S^n)\quad \mbox{and }\\
        &
         \quad u_k\wc u \mbox{ weakly in } W^{2,2}(\Omega,S^n)\, \big \}.
    \end{align*}

  The following observation plays an important role in this paper.
\begin{lem}\label{lem:hong} Let
    $u_\varepsilon$ be a minimizer of  $\mathbb H_\varepsilon$  in $W^{2,2}_{u_0}\cap W^{1,m+1}(\Omega,S^n)$.
    Then
    \begin{equation*}
        \lim_{\varepsilon\to 0} \int_{\Omega} \varepsilon \abs{\nabla u_\varepsilon}^{m+1} dx=0.
    \end{equation*}
\end{lem}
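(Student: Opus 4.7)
\emph{Proof proposal.} The plan is to use the minimality of $u_\varepsilon$ to get an upper bound for $\mathbb{H}(u_\varepsilon)+\varepsilon\int |\nabla u_\varepsilon|^{m+1}$ by an arbitrary smooth competitor (for which the $\varepsilon$-penalty vanishes in the limit), and then to combine it with a lower bound coming from the relaxed-energy characterization of $F$, thereby squeezing the penalty term to zero.

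First I would fix an arbitrary $v\in C^\infty_{u_0}(\Omega,S^n)$. Since $v\in W^{2,2}_{u_0}\cap W^{1,m+1}(\Omega,S^n)$, the minimality of $u_\varepsilon$ gives
\begin{equation*}
\mathbb{H}(u_\varepsilon)+\varepsilon\int_\Omega |\nabla u_\varepsilon|^{m+1}\,dx \;\leq\; \mathbb{H}(v)+\varepsilon\int_\Omega |\nabla v|^{m+1}\,dx.
\end{equation*}
Taking $v=u_0$ in particular shows $\mathbb{H}(u_\varepsilon)$ stays uniformly bounded as $\varepsilon\to 0$, so (together with the fixed boundary data) $\{u_\varepsilon\}$ is uniformly bounded in $W^{2,2}$. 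Along a sequence $\varepsilon_j\to 0$ that realizes $L:=\limsup_{\varepsilon\to 0}\varepsilon\int_\Omega |\nabla u_\varepsilon|^{m+1}\,dx$, I would extract a further subsequence (not relabelled) so that $u_{\varepsilon_j}\wc u$ weakly in $W^{2,2}$ and $\mathbb{H}(u_{\varepsilon_j})\to a$ for some $a\ge 0$. By Rellich compactness the target constraint is preserved in the strong $L^2$ limit and the boundary conditions (\ref{eqn:boundary}) are preserved by weak continuity of the trace, so $u\in W^{2,2}_{u_0}(\Omega,S^n)$.

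Passing to the limit in the minimality inequality along $\varepsilon_j$ and then taking the infimum over $v\in C^\infty_{u_0}(\Omega,S^n)$, and invoking Lemma~\ref{lem:approx} together with (\ref{eqn:relax}), one obtains
\begin{equation*}
a+L \;\leq\; \inf_{v\in C^\infty_{u_0}(\Omega,S^n)}\mathbb{H}(v) \;=\; \min_{w\in W^{2,2}_{u_0}(\Omega,S^n)} F(w).
\end{equation*}
On the other hand, since each $u_{\varepsilon_j}$ lies in $W^{2,2}_{u_0}\cap W^{1,m+1}(\Omega,S^n)$ and $u_{\varepsilon_j}\wc u$ weakly in $W^{2,2}$, the equivalent reformulation of $F$ recorded just after the proof of Lemma~\ref{lem:approx} gives $F(u)\leq \liminf_{j\to\infty}\mathbb{H}(u_{\varepsilon_j})=a$. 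Combining these bounds with the trivial $\min F \leq F(u)$ produces $F(u)+L\leq a+L \leq \min F \leq F(u)$, hence $L\leq 0$. Since $L\geq 0$, we conclude $L=0$ and therefore $\lim_{\varepsilon\to 0}\varepsilon\int_\Omega|\nabla u_\varepsilon|^{m+1}\,dx=0$.

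The point demanding care, rather than a genuine obstacle, is the asymmetric use of the two variational comparisons: the upper bound on $\mathbb{H}(u_\varepsilon)+\varepsilon\int|\nabla u_\varepsilon|^{m+1}$ is obtained by comparison with \emph{smooth} maps (so that the $\varepsilon$-penalty drops out in the limit), whereas the lower bound on $\liminf \mathbb{H}(u_{\varepsilon_j})$ is obtained by interpreting $\{u_{\varepsilon_j}\}$ itself as an admissible approximating sequence for $F(u)$. The latter step is only legitimate because Lemma~\ref{lem:approx} permits one to enlarge the class of approximators in the definition of $F$ from $C^\infty_{u_0}$ to $W^{2,2}_{u_0}\cap W^{1,m+1}$.
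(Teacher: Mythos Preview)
Your proof is correct and rests on the same squeeze strategy as the paper: bound $\mathbb H_\varepsilon(u_\varepsilon)$ from above by comparison with a fixed competitor, bound $\liminf \mathbb H(u_\varepsilon)$ from below, and appeal to Lemma~\ref{lem:approx} to make the two bounds coincide. The one genuine difference is in how the lower bound is obtained. You pass to a weak limit $u$, invoke the enlarged definition of $F$ to get $F(u)\le a$, and then use (\ref{eqn:relax}) to close the loop. The paper's argument is more direct: since $W^{1,m+1}(\Omega)\hookrightarrow C^0(\bar\Omega)$, each $u_{\varepsilon_i}$ already lies in $W^{2,2}_{u_0}\cap C^0_{u_0}$, so one has immediately $\mathbb H(u_{\varepsilon_i})\ge \inf_{W^{2,2}_{u_0}\cap C^0_{u_0}}\mathbb H$, with no weak limit, no $F$, and no (\ref{eqn:relax}) required. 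Your detour through $F$ works, but it imports machinery (weak compactness, the equivalent definition of $F$, the identity (\ref{eqn:relax})) that the paper avoids entirely by exploiting the Sobolev embedding.
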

\begin{proof}
    Let $\varepsilon_i$ be any subsequence going to zero such that $\lim_{i\to \infty} \int_{\Omega} \varepsilon_i \abs{\nabla u_{\varepsilon_i}}^{m+1} dx$ exists.
    In the following, we write $u_i$ for $u_{\varepsilon_i}$ for
    simplicity. Using minimality of $u_i$, we have
    \begin{eqnarray*}
        & &\inf_{v\in W^{2,2}_{u_0}\cap C^0_{u_0}(\Omega,S^n)} {\mathbb H}(v)
       \leq\liminf_{i\to \infty} {\mathbb H}(u_i)\\
       &\leq&  \liminf_{i\to \infty}  {\mathbb H}(u_i)+\lim_{i\to \infty}\int_{\Omega} \varepsilon_i\abs{\nabla u_i}^{m+1} dx \leq \limsup_{i\to \infty} {\mathbb H}_{\varepsilon_i}(u_i)\\
        &\leq & \inf_{v\in W^{2,2}_{u_0}\cap W^{1,m+1}(\Omega,S^n)} \limsup_{i\to \infty}  {\mathbb H}_{\varepsilon_i}(v) \\
        &=& \inf_{v\in W^{2,2}_{u_0}\cap W^{1,m+1}(\Omega,S^n)} {\mathbb H}(v).
    \end{eqnarray*}
Using Lemma \ref{lem:approx}, we have
\[\lim_{i\to \infty} \int_{\Omega} \varepsilon_i \abs{\nabla u_{\varepsilon_i}}^{m+1} dx =0.\]
   This proves our claim.
\end{proof}

We can now prove the first part of Theorem \ref{thm:first}, namely,
\begin{prop}
    Let $u$ be a weak limit of $u_{\varepsilon_i}$ in $W^{2,2}$. Then $u$ is a minimizer of $F$ and $u$ is a biharmonic map.
\end{prop}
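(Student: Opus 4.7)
\medskip

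\noindent\textbf{Plan.} The statement contains two independent assertions, and I would treat them separately.

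\emph{Minimality of $u$ for $F$.} The plan is to combine two opposite bounds. From the chain of inequalities inside the proof of Lemma~\ref{lem:hong} one already reads off
\begin{equation*}
\liminf_{i\to\infty} \mathbb H(u_{\varepsilon_i}) \;\leq\; \inf_{v\in W^{2,2}_{u_0}\cap C^0_{u_0}(\Omega,S^n)} \mathbb H(v),
\end{equation*}
and by (\ref{eqn:relax}) together with Lemma~\ref{lem:approx} the right-hand side equals $\min_{W^{2,2}_{u_0}} F$. Conversely, the minimizers $u_{\varepsilon_i}$ belong to $W^{2,2}_{u_0}\cap W^{1,m+1}(\Omega,S^n)$ and $u_{\varepsilon_i}\wc u$ weakly in $W^{2,2}$, so the equivalent characterisation of $F$ recorded right after Lemma~\ref{lem:approx} gives $F(u)\leq\liminf_i\mathbb H(u_{\varepsilon_i})$. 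The two bounds together force $F(u)=\min F$.

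\emph{Biharmonicity of $u$.} Here the plan is to derive the Euler--Lagrange equation for $u_\varepsilon$ and let $\varepsilon_i\to 0$. For any $\psi\in C_c^\infty(\Omega,\Real^{n+1})$ the variation $u_t:=(u_\varepsilon+t\psi)/\abs{u_\varepsilon+t\psi}$ is admissible for small $t$, and the minimality of $u_\varepsilon$ combined with $\tfrac{d}{dt}\mathbb H_\varepsilon(u_t)|_{t=0}=0$ yields a weak identity of the schematic form
\begin{equation*}
\int_\Omega \triangle u_\varepsilon\cdot\triangle\Phi_\varepsilon\,dx \;+\; \frac{m+1}{2}\,\varepsilon\int_\Omega \abs{\nabla u_\varepsilon}^{m-1}\nabla u_\varepsilon\cdot\nabla\Phi_\varepsilon\,dx \;=\; 0,
\end{equation*}
with $\Phi_\varepsilon:=\psi-(\psi\cdot u_\varepsilon)u_\varepsilon$. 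The perturbation integral is killed by H\"older,
\begin{equation*}
\varepsilon\int_\Omega \abs{\nabla u_\varepsilon}^{m}\abs{\nabla\Phi_\varepsilon}\,dx \;\leq\; \varepsilon^{\frac{1}{m+1}}\Bigl(\varepsilon\int_\Omega \abs{\nabla u_\varepsilon}^{m+1}dx\Bigr)^{\frac{m}{m+1}}\norm{\nabla\Phi_\varepsilon}_{L^{m+1}},
\end{equation*}
which tends to zero by Lemma~\ref{lem:hong}. Rellich compactness combined with the constraint $\abs{u_\varepsilon}\equiv 1$ gives $u_{\varepsilon_i}\to u$ strongly in $W^{1,2}\cap L^\infty$, hence $\Phi_{\varepsilon_i}\to\Phi:=\psi-(\psi\cdot u)u$ strongly in $W^{1,2}\cap L^\infty$, and the remaining biharmonic term should converge to $\int_\Omega \triangle u\cdot\triangle\Phi\,dx$, which is precisely the weak biharmonic map equation into $S^n$.

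The delicate point I anticipate is the last passage to the limit, because $\triangle\Phi_{\varepsilon_i}$ contains a factor $\triangle u_{\varepsilon_i}$ that converges only weakly in $L^2$, so the integrand is \emph{a priori} a product of two weakly convergent quantities. The standard remedy is to reorganise the biharmonic map identity in the style of Chang--Wang--Yang, writing every occurrence of $\triangle u_{\varepsilon_i}$ multiplied against a strongly convergent factor built out of $\psi$, $u_{\varepsilon_i}$ and $\nabla u_{\varepsilon_i}$; after this reorganisation the relevant bilinear forms are of weak-times-strong type and pass to the limit. Once this is carried out, the two assertions together complete the proof.
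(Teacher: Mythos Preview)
Your plan matches the paper's proof: the minimality step is the same (the paper observes directly that $F(v)\geq \inf_{C^\infty_{u_0}}\mathbb H$ for every $v$ by the very definition of $F$, so $F(u)\leq\liminf_i\mathbb H(u_{\varepsilon_i})=\inf_{C^\infty_{u_0}}\mathbb H$ already forces minimality, without detouring through \eqref{eqn:relax}), and the biharmonicity step proceeds exactly through the Euler--Lagrange equation plus Lemma~\ref{lem:hong}. For the weak--times--weak difficulty you flag, the paper carries out precisely the reorganisation you propose, invoking Wang~\cite{W1} to rewrite the equation in the divergence form
\[
2\triangle\bigl(\nabla\cdot(\nabla u_\varepsilon\times u_\varepsilon)\bigr)-4\nabla\cdot(\triangle u_\varepsilon\times\nabla u_\varepsilon)-\varepsilon(m+1)\nabla\cdot\bigl(\abs{\nabla u_\varepsilon}^{m-1}\nabla u_\varepsilon\times u_\varepsilon\bigr)=0,
\]
so that after testing against a fixed $\psi\in C_c^\infty$ each term is a pairing of a weakly $L^2$-convergent factor with a strongly convergent one (and the $\varepsilon$-term involves only $\abs{\nabla u_\varepsilon}^{m}$ against the smooth $\nabla\psi$, sidestepping the unbounded $\norm{\nabla\Phi_\varepsilon}_{L^{m+1}}$ in your displayed estimate).
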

\begin{proof}
    By the definition of $F$ and Lemma \ref{lem:hong}, we have
    \begin{equation*}
        F(u)\leq \liminf_{i\to \infty} {\mathbb H}(u_{\varepsilon_i})=\inf_{v\in W^{2,2}_{u_0}\cap W^{1,m+1}(\Omega,S^n)} {\mathbb H}(v)=\inf_{v\in C^{\infty}_{u_0}(\Omega,S^n)}{\mathbb H}(v).
    \end{equation*}
    By the definition of $F$ again, $u$ is a minimizer of $F$ among all functions in $W^{2,2}_{u_0}(\Omega, S^n)$.

It is straightforward  to see that $u_\varepsilon$ satisfies the
Euler-Lagrange equation
\begin{eqnarray*}
    2\triangle^2 u_\varepsilon+ 2(\abs{\triangle u_\varepsilon}^2 +2\nabla\cdot(\nabla u_\varepsilon\cdot \triangle u_\varepsilon)-\triangle\abs{\nabla u_\varepsilon}^2)u_\varepsilon && \\
    -\varepsilon(m+1)[\nabla\cdot(\abs{\nabla u_\varepsilon}^{m-1}\nabla u_\varepsilon)+\abs{\nabla u_\varepsilon}^{m+1}u_\varepsilon]&=&0.
\end{eqnarray*}
This equation can be rewritten into a 'divergence' form (see
\cite{W1}) as follows,
\begin{eqnarray*}
    2\triangle(\nabla \cdot (\nabla u_\varepsilon\times u_\varepsilon))-4\nabla\cdot(\triangle u_\varepsilon\times \nabla u_\varepsilon) &&\\
    -\varepsilon(m+1)[\nabla\cdot (\abs{\nabla u_\varepsilon}^{m-1}\nabla u_\varepsilon\times u_\varepsilon)]&=&0.
\end{eqnarray*}
Due to Lemma \ref{lem:hong}, we conclude that the weak limit $u$ of $u_\varepsilon$ in $W^{2,2}_{u_0}(\Omega,S^n)$ satisfies
\begin{equation*}
    \triangle(\nabla \cdot (\nabla u\times u))-2\nabla\cdot(\triangle u\times \nabla u) =0.
\end{equation*}
Hence, $u$ is a biharmonic map (see \cite{W1}).
\end{proof}

The second part of Theorem \ref{thm:first} is to prove partial
regularity of the limiting map  $u$ of a sequence of minimizers
$\{u_{\varepsilon_i}\}$. It is well known that a monotonicity
formula plays an indispensable role in the proof of partial
regularity for stationary biharmonic maps. Since the minimizer
$u$ of $F$ is not stationary, we cannot prove a monotonicity
formula for $u$ directly. Fortunately, each $u_{\varepsilon}$  is
a minimizer  of ${\mathbb H}_{\varepsilon}$ in $W^{2,2}\cap
W^{1+m}(\Omega ; S^n)$. Hence, we will derive a monotonicity
formula for $u_\varepsilon$ first and then let $\varepsilon$ go to
zero.

Angelsberg \cite{Ang} gave a detailed derivation of a monotonicity
formula for stationary biharmonic maps. Since the functional
${\mathbb H}_\varepsilon$ is a perturbation of the Hessian energy,
most part of the proof in  \cite{Ang} can be used here. For the
convenience of readers, we stick to the notations used in
\cite{Ang} except for that we write subscripts of Greek letters to
indicate partial derivatives instead of Latin letters. For
example, $u_{\varepsilon,\alpha\beta}$ means
$\frac{\partial^2}{\partial x_\alpha \partial
x_\beta}u_{\varepsilon}$.

\begin{lem}\label{lem:pre}
    Let $u_\varepsilon$ be a minimizer of ${\mathbb H}_\varepsilon$ in $W^{2,2}\cap W^{1+m}(B_{2r},S^n)$.  Then we have
    \begin{eqnarray*}
        \int_{B_{2r}} -\nabla\cdot(\abs{\triangle u_\varepsilon}^2+\varepsilon \abs{\nabla u_\varepsilon}^{m+1}) +4 u_{\varepsilon,\gamma\gamma}u_{\varepsilon,\alpha\beta}\xi^\beta_\alpha &&\\
        +2u_{\varepsilon,\gamma\gamma}u_{\varepsilon,\beta}\xi^\beta_{\alpha\alpha}+\varepsilon(m+1)\abs{\nabla u_\varepsilon}^{m-1}u_{\varepsilon,\alpha} u_{\varepsilon,\beta} \xi^\beta_\alpha &=&0,
    \end{eqnarray*}
    for every test function $\xi\in C_0^\infty (B_{2r},\Real^m)$.
\end{lem}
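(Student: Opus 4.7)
The identity is the inner-variation (Pohozaev-type) formula for $\mathbb H_\varepsilon$ at the minimizer $u_\varepsilon$, and the plan is to derive it by composing $u_\varepsilon$ with a one-parameter family of diffeomorphisms of $B_{2r}$. Fix $\xi\in C_0^\infty(B_{2r},\Real^m)$ and for $|t|$ small set $\phi_t(x)=x+t\xi(x)$, a diffeomorphism of $B_{2r}$ which equals the identity near $\partial B_{2r}$. Let $\psi_t=\phi_t^{-1}$ and $v_t:=u_\varepsilon\circ\psi_t$. Then $v_t$ takes values in $S^n$, lies in $W^{2,2}\cap W^{1,m+1}(B_{2r},S^n)$, and agrees with $u_\varepsilon$ near $\partial B_{2r}$, so it is an admissible competitor. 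Minimality of $u_\varepsilon$ yields
\begin{equation*}
    \frac{d}{dt}\bigg|_{t=0}\mathbb H_\varepsilon(v_t)=0,
\end{equation*}
and the remaining task is to compute this derivative and match it with the claimed integrand.

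Pulling $v_t$ back to $B_{2r}$ via $y=\phi_t(x)$ gives
\begin{equation*}
    \mathbb H_\varepsilon(v_t)=\int_{B_{2r}}\bigl(\abs{\triangle v_t(\phi_t(x))}^2+\varepsilon\abs{\nabla v_t(\phi_t(x))}^{m+1}\bigr)\det D\phi_t(x)\,dx.
\end{equation*}
Since $\det D\phi_t=1+t\,\nabla\!\cdot\xi+O(t^2)$, the Jacobian contributes $(\abs{\triangle u_\varepsilon}^2+\varepsilon\abs{\nabla u_\varepsilon}^{m+1})\,\nabla\!\cdot\xi$ upon differentiation at $t=0$. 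For the rest, writing $A_t:=(D\phi_t)^{-1}=(D\psi_t)\circ\phi_t$, the chain rule yields
\begin{align*}
    \partial_\alpha v_t(\phi_t(x))&=u_{\varepsilon,\gamma}(x)\,(A_t)^\gamma_\alpha,\\
    \triangle v_t(\phi_t(x))&=u_{\varepsilon,\gamma\delta}(x)\,(A_t)^\gamma_\alpha(A_t)^\delta_\alpha+u_{\varepsilon,\gamma}(x)\,B_t^\gamma(x),
\end{align*}
where $B_t^\gamma(x):=\sum_\alpha\partial_\alpha^2(\psi_t^\gamma)(\phi_t(x))$. From $\psi_t\circ\phi_t=\mathrm{id}$ one obtains $\dot\psi_0=-\xi$ and hence $\frac{d}{dt}|_{0}A_t=-\nabla\xi$; differentiating $\psi_t\circ\phi_t=\mathrm{id}$ twice in $x$, once in $t$, and using $D^2\phi_0=D^2\psi_0=0$, gives $\frac{d}{dt}|_{0}B_t^\gamma=-\xi^\gamma_{\alpha\alpha}$. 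A short computation then produces
\begin{align*}
    \frac{d}{dt}\bigg|_{t=0}\abs{\nabla v_t(\phi_t)}^{m+1}&=-(m+1)\abs{\nabla u_\varepsilon}^{m-1}u_{\varepsilon,\alpha}u_{\varepsilon,\beta}\xi^\beta_\alpha,\\
    \frac{d}{dt}\bigg|_{t=0}\abs{\triangle v_t(\phi_t)}^2&=-4\,u_{\varepsilon,\gamma\gamma}u_{\varepsilon,\alpha\beta}\xi^\beta_\alpha-2\,u_{\varepsilon,\gamma\gamma}u_{\varepsilon,\beta}\xi^\beta_{\alpha\alpha}.
\end{align*}
Summing these with the Jacobian contribution and equating to zero gives the stated identity, with the first summand in the lemma to be understood as $-(\nabla\!\cdot\xi)(\abs{\triangle u_\varepsilon}^2+\varepsilon\abs{\nabla u_\varepsilon}^{m+1})$.

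The only delicate step is the Hessian piece, where $B_t^\gamma$ must be differentiated in $t$; this is precisely the bookkeeping carried out by Angelsberg in \cite{Ang} for stationary biharmonic maps. The addition of the perturbation $\varepsilon\abs{\nabla u_\varepsilon}^{m+1}$ is straightforward, since it only involves first derivatives of $u_\varepsilon$ and of $\psi_t$, and accounts for the $\varepsilon$-dependent terms in the lemma.
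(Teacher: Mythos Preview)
Your proposal is correct and follows exactly the approach the paper has in mind: the paper's own proof consists of the single sentence ``The proof is just a direct computation (see \cite{CWY})'', and what you have written is precisely that inner-variation computation, carried out in full detail and extended by the obvious first-order contribution of the perturbation $\varepsilon\abs{\nabla u_\varepsilon}^{m+1}$. Your reading of the first summand as $-(\nabla\!\cdot\xi)(\abs{\triangle u_\varepsilon}^2+\varepsilon\abs{\nabla u_\varepsilon}^{m+1})$ is the intended one.
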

The proof is just a direct computation (see \cite {CWY}). Now we
can state our monotonicity formula

\begin{thm}\label{thm:mono}
  Let $u_\varepsilon$ be a minimizer of ${\mathbb H}_\varepsilon$ on $B_{R_0}$ for some $R_0>0$. Then for all $\rho$ and $r$ with $0<\rho<r<R_0/2$, we have
  \begin{equation*}
      r^{4-m}\int_{B_{r}}\abs{\triangle u_\varepsilon}^2+\varepsilon\abs{\nabla u_\varepsilon}^{m+1} dx -\rho^{4-m}\int_{B_{\rho}}\abs{\triangle u_{\varepsilon}}^2+\varepsilon \abs{\nabla u_\varepsilon}^{m+1} dx =P+R+Q,
  \end{equation*}
  where
\begin{eqnarray*}
    P&=&4\int_{B_{r}\setminus B_{\rho}}\left( \frac{(u_{\varepsilon,\beta}+x^\alpha u_{\varepsilon,\alpha\beta})^2}{\abs{x}^{m-2}}
    +\frac{(m-2)(x^\alpha u_{\varepsilon,\alpha})^2}{\abs{x}^m} \right)dx, \\
    &&+\varepsilon(m+1)\int_{B_{r}\setminus B_{\rho}} \frac{\abs{\nabla u_\varepsilon}^{m-1}(x^\alpha u_{\varepsilon,\alpha})^2}{\abs{x}^{m-2}} dx, \\
    R&=& 2\int_{\partial B_{r}\setminus \partial B_{\rho}}\left( -\frac{x^\alpha u_{\varepsilon,\beta} u_{\varepsilon,\alpha\beta}}{\abs{x}^{m-3}}+2\frac{(x^\alpha u_{\varepsilon,\alpha})^2}{\abs{x}^{m-1}}-2\frac{\abs{\nabla u_\varepsilon}^2}{\abs{x}^{m-3}}  \right)d\sigma \\
    Q&=& \varepsilon(3-m)\int_\rho^r \tau^{3-m}\int_{B_{\tau}} \abs{\nabla u_\varepsilon}^{m+1} dxd\tau.
\end{eqnarray*}
\end{thm}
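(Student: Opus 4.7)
The plan is to derive the identity by inserting a one-parameter family of radial test vector fields into the inner-variation formula of Lemma \ref{lem:pre} and integrating in the radial parameter. This follows the standard strategy for stationary biharmonic maps of Chang--Wang--Yang \cite{CWY} and Angelsberg \cite{Ang}; the only genuinely new ingredient is the bookkeeping of the $\varepsilon\abs{\nabla u_\varepsilon}^{m+1}$ perturbation.

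For a parameter $t\in(0,R_0/2)$ and a smooth cutoff $\phi$, I take $\xi^\beta(x)=x^\beta\phi(\abs{x})$. A direct computation gives
\[
\xi^\beta_\alpha=\delta^\beta_\alpha\phi(\abs{x})+\frac{x^\alpha x^\beta}{\abs{x}}\phi'(\abs{x}),\qquad
\xi^\beta_{\alpha\alpha}=x^\beta\phi''(\abs{x})+(m+1)\frac{x^\beta}{\abs{x}}\phi'(\abs{x}),
\]
so in particular $\nabla\cdot\xi=m\phi+\abs{x}\phi'$. Substituting into Lemma \ref{lem:pre} and using the pointwise identities $x^\alpha u_{\varepsilon,\alpha}=\abs{x}u_{\varepsilon,r}$ together with $u_{\varepsilon,\alpha\beta}x^\alpha x^\beta=\abs{x}^2 u_{\varepsilon,rr}$, each integrand reduces to an explicit combination of $\abs{\triangle u_\varepsilon}^2$, $\varepsilon\abs{\nabla u_\varepsilon}^{m+1}$ and radial products of first and second derivatives of $u_\varepsilon$, weighted by $\phi$, $\phi'$ or $\phi''$.

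Sharpening $\phi$ to $\chi_{[0,t]}$ (after one integration by parts in $\abs{x}$ to eliminate $\phi''$) converts the $\phi$-weighted pieces into bulk integrals over $B_t$ and the $\phi'$-weighted pieces into surface integrals over $\partial B_t$. Combining the resulting identity with the elementary formula
\[
\frac{d}{dt}\Big(t^{4-m}\int_{B_t}f\,dx\Big)=(4-m)t^{3-m}\int_{B_t}f\,dx+t^{4-m}\int_{\partial B_t}f\,d\sigma,
\]
applied to $f=\abs{\triangle u_\varepsilon}^2+\varepsilon\abs{\nabla u_\varepsilon}^{m+1}$, produces for a.e. $t\in(\rho,r)$ an identity that expresses the derivative of the scaled energy as the sum of the integrand of $P$, the radial derivative of the boundary quantity $R$ and the integrand of $Q$. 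Integrating in $t$ from $\rho$ to $r$ then yields the stated formula.

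The main obstacle is the algebraic rearrangement that identifies the bulk contribution as the manifestly nonnegative density in $P$: the combination $4 u_{\varepsilon,\gamma\gamma}u_{\varepsilon,\alpha\beta}\xi^\beta_\alpha+2 u_{\varepsilon,\gamma\gamma}u_{\varepsilon,\beta}\xi^\beta_{\alpha\alpha}$ coming from Lemma \ref{lem:pre} must be paired with the $(4-m)t^{3-m}\int_{B_t}\abs{\triangle u_\varepsilon}^2$ term produced by the scaling derivative, and a completion of squares must deliver precisely the perfect square $(u_{\varepsilon,\beta}+x^\alpha u_{\varepsilon,\alpha\beta})^2/\abs{x}^{m-2}$ together with the positive radial density $(m-2)(x^\alpha u_{\varepsilon,\alpha})^2/\abs{x}^m$. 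The third summand of $P$ comes directly from the $\varepsilon(m+1)\abs{\nabla u_\varepsilon}^{m-1}u_{\varepsilon,\alpha}u_{\varepsilon,\beta}\xi^\beta_\alpha$ term, while the $(3-m)$ prefactor of $Q$ is produced by the mismatch between the scaling of $\abs{\triangle u_\varepsilon}^2$ and that of $\varepsilon\abs{\nabla u_\varepsilon}^{m+1}$, i.e.\ by combining the $(4-m)$ from the scaling derivative with the $(m+1)$ from the perturbation growth.
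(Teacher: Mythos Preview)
Your proposal is correct and follows essentially the same route as the paper: both plug the radial test field $\xi(x)=x\,\phi(\abs{x})$ (the paper uses the scaled form $\psi^t(\abs{x}/\tau)x$) into Lemma~\ref{lem:pre}, differentiate the scaled energy $\tau^{4-m}\int_{B_\tau}(\abs{\triangle u_\varepsilon}^2+\varepsilon\abs{\nabla u_\varepsilon}^{m+1})$, and integrate in the radial parameter from $\rho$ to $r$, deferring the completion-of-squares identification of $P$ and $R$ to Angelsberg \cite{Ang}. Your reading of the $(3-m)$ prefactor in $Q$ as the scaling mismatch $(4-m)-1$ between the Hessian and the $(m+1)$-Dirichlet parts is exactly how it arises in the paper's computation.
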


\begin{proof}
    We follow the proof in \cite{Ang}. Choose a test function  $\xi(x)=\psi^t(\frac{\abs{x}}{\tau})x$,
    where $\psi=\psi^t:\Real_+\to [0,1]$ is smooth with compact support on $[0,1]$ and $\psi^t \equiv 1$ on $[0,1-t]$. Then by Lemma
    \ref{lem:pre}, we have
    \begin{eqnarray*}\label{eqn:1}
        0&=&\int_{\Real^m} \left( (4-m) \abs{\triangle u_\varepsilon}^2\psi -\abs{\triangle u_\varepsilon}^2 \psi_\alpha x^\alpha +4u_{\varepsilon,\alpha\alpha}u_{\varepsilon,\beta\gamma}\psi_\beta x^\gamma \right.\\
        &&\left. +4u_{\varepsilon,\alpha\alpha}u_{\varepsilon,\beta}\psi_\beta +2u_{\varepsilon,\alpha\alpha}u_{\varepsilon,\beta}\psi_{\gamma\gamma}x^\beta \right) \\
    &&+\varepsilon (\psi\abs{\nabla u_\varepsilon}^{m+1}-\abs{\nabla u_\varepsilon}^{m+1}\psi_\alpha x^\alpha +(m+1)\abs{\nabla u_\varepsilon}^{m-1}u_{\varepsilon,\alpha}\psi_\alpha u_{\varepsilon,\beta} x^\beta) dx.
    \end{eqnarray*}
    Since
    $\psi_\alpha(\frac{\abs{x}}{\tau})=\frac{1}{\tau}\psi'(\frac{\abs{x}}{\tau})\frac{x^\alpha}{\abs{x}}$, we
    have
    \begin{equation}\label{eqn:2}
        0=\int_{\Real^m} \cdots+\varepsilon (\psi\abs{\nabla u_\varepsilon}^{m+1}-\abs{\nabla u_\varepsilon}^{m+1}\frac{1}{\tau}\psi'\abs{x}
        +(m+1)\abs{\nabla u_\varepsilon}^{m-1}\frac{1}{\tau}\psi'\frac{(u_{\varepsilon,\beta} x^\beta)^2}{\abs{x}})
        dx,
    \end{equation}
    where for simplicity we use `$\cdots$' to denote those terms which are the same as in \cite{Ang}.

    Set
    \begin{equation*}
        I^t(\tau)=\tau^{4-m}\int_{\Real^m}(\abs{\triangle u_\varepsilon}^2+\varepsilon\abs{\nabla u_\varepsilon}^{m+1})\psi^{t}(\frac{\abs{x}}{\tau})
        dx.
    \end{equation*}
    We have
    \begin{eqnarray*}
        \tau^{m-3}\frac{d}{d\tau}I^t(\tau)&=& (4-m)\int_{\Real^m} (\abs{\triangle u_\varepsilon}^2+\varepsilon \abs{\nabla u_\varepsilon}^{m+1})\psi dx \\
        &&-\frac{1}{\tau}\int_{\Real^m}(\abs{\triangle u_\varepsilon}^2 +\varepsilon \abs{\nabla u_\varepsilon}^{m+1})\psi'\abs{x}dx \\
        &=& \cdots \\
        &&+\varepsilon[\int_{\Real^m} \abs{\nabla u_\varepsilon}^{m+1}\psi dx -\frac{1}{\tau}\int_{\Real^m}\abs{\nabla u_\varepsilon}^{m+1}\psi' \abs{x}  dx ]\\
        && +\varepsilon(3-m)\int_{\Real^m} \abs{\nabla u_\varepsilon}^{m+1}\psi dx \\
        &=& \cdots +\varepsilon\int_{\Real^m}-(m+1)\abs{\nabla u_\varepsilon}^{m-1}\psi'\frac{1}{\tau}\frac{(x^\alpha u_{\varepsilon,\alpha})^2}{\abs{x}} dx \\
        && + \varepsilon(3-m)\int_{\Real^m}\abs{\nabla u_\varepsilon}^{m+1}\psi dx.
    \end{eqnarray*}
    Here we have used equation (\ref{eqn:2}) in the last equality.
 Multiplying both sides by $\tau^{3-m}$ and integrating over $\tau$ from $\rho$ to $r$ yield
    \begin{eqnarray*}
        I^t(r)-I^t(\rho) &=& \cdots + \varepsilon \int_\rho^r \tau^{2-m}\int_{\Real^m} -(m+1)\abs{\nabla u_\varepsilon}^{m-1}\psi' \frac{(x^\alpha u_{\varepsilon,\alpha})^2}{\abs{x}} dxd\tau \\
        && +\varepsilon(3-m) \int_\rho^r \tau^{3-m}\int_{\Real^m} \abs{\nabla u_\varepsilon}^{m+1}\psi dx d\tau.
    \end{eqnarray*}
    Letting $t$ go to zero and applying Lemma 2 in the Appendix of \cite{Ang}, we obtain
    \begin{eqnarray*}
        && r^{4-m}\int_{B_{r}}\abs{\triangle u_\varepsilon}^2+\varepsilon\abs{\nabla u_\varepsilon}^{m+1}dx -\rho^{4-m}\int_{B_{\rho}}\abs{\triangle u_\varepsilon}^2+\varepsilon \abs{\nabla u_\varepsilon}^{m+1} dx\\
    &=& \int_{B_r\setminus B_\rho} \left( 4\frac{u_{\varepsilon,\alpha\alpha}u_{\varepsilon,\beta\gamma}x^\beta x^\gamma}{\abs{x}^{m-2}}+ 8\frac{u_{\varepsilon,\alpha\alpha}u_{\varepsilon,\beta} x^\beta}{\abs{x}^{m-2}} \right)dx -2\int_{\partial B_r\setminus \partial B_\rho}\frac{u_{\varepsilon,\alpha\alpha}u_{\varepsilon,\beta} x^\beta}{\abs{x}^{m-3}}d\sigma \\
    &&+ \varepsilon(m+1) \int_{B_{r}\setminus B_{\rho}}\frac{\abs{\nabla u_\varepsilon}^{m-1}(x^\alpha u_{\varepsilon,\alpha})^2}{\abs{x}^{m-2}} dx \\
        && +\varepsilon(3-m)\int_\rho^r \tau^{3-m}\int_{B_{\tau}}\abs{\nabla u_\varepsilon}^{m+1} dx d\tau.
    \end{eqnarray*}
    For the first line in the right hand side of the above equation, it needs further transformations before reaching the final form appeared in the statement of the theorem as given in \cite{Ang}. However, this does not concern us, since the last two terms above are in their final form.
\end{proof}
\begin{rem}
    If we compare Theorem \ref{thm:mono} with the monotonicity formula of the biharmonic maps
    in \cite{CWY} and \cite{Ang}, there is an additional term in $P$ and a new term $Q$.
    The additional term in $P$ is the contribution of $\varepsilon\abs{\nabla u}^{m+1}$ term in the perturbed energy.
    The new term $Q$ is caused by the fact that the two terms in the perturbed energy transform differently when scaled.
    Moreover, $Q$ is of the unfavorable sign and we need to get rid of it by taking $\varepsilon$ to zero.
\end{rem}

Let $u_{\varepsilon_i}$ be the sequence in the statement of Theorem \ref{thm:first}.
Due to the minimizing property of $u_{\varepsilon_i}$,
\begin{equation*}
    {\mathbb H}(u_{\varepsilon_i})\leq {\mathbb H}_{\varepsilon_i}(u_{\varepsilon_i}) \leq
  C
\end{equation*}
for some constant $C>0$ independent of $i$. Set
\begin{equation*}
    \Sigma=\bigcap_{R>0}\left\{ x_0\in \Omega| B_R(x_0)\subset \Omega,\quad \liminf_{i\to \infty} R^{4-m}\int_{B_R(x_0)} \abs{\triangle u_{\varepsilon_i}}^2 dx \geq \varepsilon_0 \right\}
\end{equation*}
for a sufficiently small constant $\varepsilon_0$ to be fixed
later. For the proof of ${\mathcal H}^{m-4}(\Sigma)< +\infty$, we
refer to the proof of Theorem 3.4 in \cite{Scheven}. For the
relative closeness of $\Sigma$,  an elementary proof will be given
in the last section in the proof of Theorem \ref{thm:third} (see also
\cite{GHY}).

Now we prove Theorem \ref{thm:first},
\begin{proof}[Proof of Theorem \ref{thm:first}]
    The first part is already proved. It suffices to prove the partial regularity.
  Let $x$ be a point in $\Omega\setminus \Sigma$. Without loss of generality, we assume that $x$ is the origin. By the definition of $\Sigma$, there exists some $R>0$ such that $B_{R}\subset \Omega$ and (taking a subsequence if necessary)
\begin{equation*}
  \lim_{i\to \infty}R^{4-m}\int_{B_{R}}\abs{\triangle u_{\varepsilon_i}}^2 dx<\varepsilon_0.
\end{equation*}
For simplicity, we will write $u_i$ for $u_{\varepsilon_i}$. It is
easy to see that for each $y\in B_{R/2}$,
\begin{equation*}
    \lim_{i\to \infty} (R/2)^{4-m}\int_{B_{R/2}(y)}\abs{\triangle u_i}^2 dx < C(m)\varepsilon_0.
\end{equation*}

\noindent{\bf We claim:} For almost every $y\in B_{R/2}$, for any
$r<R/8$, there exists some radius $r/2<\rho<r$ such that
\begin{equation*}
  \rho^{4-m}\int_{B_\rho(y)} \abs{\triangle u}^2 dx\leq C(m)\varepsilon_0.
\end{equation*}

Before we prove this claim, we show how Theorem \ref{thm:first} follows
from this claim. Since $u$ takes value in the sphere, it is
obvious that
\begin{equation*}
  \rho^{4-m}\int_{B_\rho(y)} \abs{\triangle u}^2 +\abs{\nabla u}^4 dx \leq C(m) \varepsilon_0.
\end{equation*}
This implies that
\begin{equation*}
    (r/2)^{4-m}\int_{B_{r/2}(y)}\abs{\triangle u}^2 +\abs{\nabla u}^4 dx\leq C(m) \varepsilon_0.
\end{equation*}
By the arbitrariness of $r$ and the density of $y$, we obtain
\begin{equation*}
  \norm{\nabla^2 u}_{L^{2,m-2}(B_{R/3})}+\norm{\nabla u}_{L^{4,m-4}(B_{R/3})}\leq C(m)\varepsilon_0.
\end{equation*}
Here $L^{p,m-p}$ is the standard Morrey space (see \cite{Struwe}).
For $\varepsilon_0$ sufficiently small,  $u$ is smooth in
$B_{R/3}$ since $u$ is biharmonic (cf. \cite{Struwe}).

Now let us prove the Claim. Without loss of generality, we assume
that $R=2$ and $y$ is the origin.

Since
\begin{equation}\label{eqn:A}
  \int_{B_{1}\setminus B_{1/2}} \abs{\triangle u_i}^2 dx< C(m) \varepsilon_0,
\end{equation}
we can choose $r$ such that
\begin{equation}\label{eqn:B}
  \int_{\partial B_{r}} \abs{\triangle u_i}^2 dx\leq C(m) \varepsilon_0,
\end{equation}
for infinitely many $i$'s. We assume by taking subsequence that this is true for all $i$. Assume without loss of generality that $r=1$. (Otherwise, consider $B_{r}$ instead of $B_{1}$.)

Following Struwe \cite{Struwe}, we write the monotonicity formula in the following form.

\begin{eqnarray}\label{eqn:kaka}
    \sigma_i(r)-\sigma_i(\rho)&=&\int_{B_{r}\setminus B_{\rho}} \left( \frac{\abs{u_{i,\beta}+x^\alpha u_{i,\alpha\beta}}^2}{\abs{x}^{m-2}}+(m-2)\frac{\abs{x^\alpha u_{i,\alpha}}^2}{\abs{x}^m} \right) \\ \nonumber
    && +\varepsilon_i(m+1)\int_{B_{r}\setminus B_{\rho}}\frac{\abs{\nabla u_i}^{m-1}(x^\alpha u_{i,\alpha})^2}{\abs{x}^{m-2}} \\ \nonumber
    && +\varepsilon_i(3-m)\int_\rho^r \tau^{3-m}\int_{B_{r}}\abs{\nabla u_i}^{m+1}dx d\tau,
\end{eqnarray}
where $\sigma_i(r)=\sigma_{i,1}(r)+\sigma_{i,2}(r)$ with
\begin{equation*}
    \sigma_{i,1}(r)=r^{4-m}\int_{B_{r}} \abs{\triangle u_i}^2 +\varepsilon_i \abs{\nabla u_i}^{m+1} d\sigma
\end{equation*}
and
\begin{equation*}
    \sigma_{i,2}(r)=r^{3-m}\int_{\partial B_{r}} (2x^\alpha u_{i,\alpha\beta}u_{i,\beta} +4\abs{\nabla u_i}^2 -4r^{-2}\abs{x^\alpha u_{i,\alpha}}^2) d\sigma.
\end{equation*}

Denote by $\mathcal R(\varepsilon_i,\rho)$ the last term in
equation (\ref{eqn:kaka}).  Let $E_1$ be the intersection of the
sets of Lebesgue points of $\abs{\nabla u_i}^{m+1}$ for all $i$.
Then the complement of $E_1$ is a set of zero Lebesgue measure.
For each $y\in E_1$,
\begin{equation*}
    \lim_{\rho\to 0}\mathcal R(\varepsilon_i,\rho)
\end{equation*}
exists. In the following, we assume $y\in E_1$ and denote the
above limit by $\mathcal R(\varepsilon_i,0)$.

For fixed $i$ and any $k\in \mathbb N$, there is a good slice $0<r_k<\frac{1}{k}$ such that
\begin{eqnarray*}
    \abs{\sigma_i(r_k)}&\leq& Cr_k^{4-m}\int_{B_{r_k}} \abs{\triangle u_i}^2+\varepsilon_i \abs{\nabla u_i}^{m+1} dx
     +Cr_k^{5-m}\int_{\partial B_{r_k}} (\abs{\nabla^2 u_i}^2+r_k^{-2}\abs{\nabla u_i}^2) d\sigma \\
    &\leq& Cr_k^{4-m}\int_{B_{2r_k}}(\abs{\nabla^2 u_i}^2+\varepsilon_i \abs{\nabla u_i}^{m+1}+r_k^{-2}\abs{\nabla u_i}^2) dx.
\end{eqnarray*}
Let $E_2$ be the intersection of the sets of Lebesgue points of
$\abs{\nabla^2 u_i}^2+\abs{\nabla u_i}^2$ for all $i$. The
complement of $E_2$ is also of Lebesgue measure zero. If we assume
$y\in E_1\cap E_2$, we have
\begin{equation}\label{eqn:C}
    \lim_{k\to \infty} \abs{\sigma_i(r_k)}=0.
\end{equation}
In (\ref{eqn:kaka}), set $\rho=r_k$ and let $k$ go to infinity.
Then we obtain
\begin{equation*}
    \sigma_{i}(1)-\lim_{k\to \infty} \sigma_i(r_k)=\int_{B_{1}}\left( \cdots \right) +\mathcal R(\varepsilon_i,0),
\end{equation*}
where `$\cdots$' stands for the two positive integrals in
(\ref{eqn:kaka}).

By (\ref{eqn:B}), we know
\begin{equation*}
    \sigma_i(1)\leq C(m)\varepsilon_0.
\end{equation*}
Therefore, we prove
\begin{equation}\label{eqn:haha}
    \int_{B_{1}} \left( \frac{\abs{u_{i,\beta}+x^\alpha u_{i,\alpha\beta}}^2}{\abs{x}^{m-2}}
    +(m-2)\frac{\abs{x^\alpha u_{i,\alpha}}^2}{\abs{x}^m} \right)\leq C(m)\varepsilon_0 -\mathcal R(\varepsilon_i,0).
\end{equation}

Since we know
\begin{equation*}
    \varepsilon_i\int_{B_{1}}\abs{\nabla u_i}^{m+1}dx\to 0,
\end{equation*}
we may assume that there is a subsequence of $i$ (still denoted by $i$) such that
\begin{equation*}
    T(x)=\sum_{i=1}^\infty 2^{i} \varepsilon_i \abs{\nabla u_{\varepsilon_i}}^{m+1}\in L^1(B_{1}).
\end{equation*}
Hence,
\begin{eqnarray*}
    \mathcal R(\varepsilon_i,0)=\varepsilon_i \int_0^1 \tau^{3-m}\int_{B_{r}} \abs{\nabla u_i}^{m+1}dx d\tau &\leq
    & \frac{1}{2^i}\int_0^1 \tau^{3-m}\int_{B_{r}} T(x)dx d\tau.
\end{eqnarray*}

Let $E_3$ be the set of Lebesgue points of $T$. If $y\in E_1\cap E_2\cap E_3$, then
\begin{equation}\label{eqn:R0}
    \lim_{i\to \infty} \mathcal R(\varepsilon_i,0)=0.
\end{equation}
With these preparations, we can now estimate $\sigma_{i,2}$ from below. By (\ref{eqn:haha}),
for any $r>0$,
\begin{eqnarray*}
    &&\inf_{r/2<\rho<r} \rho^{3-m}\int_{\partial B_{\rho}}(\abs{u_{i,\beta}+x^\alpha u_{i,\alpha\beta}}^2+4\rho^{-2}\abs{x^\alpha u_{i,\alpha}}^2) d\sigma \\
    &\leq& C\int_{B_{r}\setminus B_{r/2}}\left( \frac{\abs{u_{i,\beta}+x^\alpha u_{i,\alpha\beta}}^2}{\abs{x}^{m-2}}+(m-2)\frac{\abs{x^\alpha u_{i,\alpha}}^2}{\abs{x}^m}
     \right)dx\\
    &\leq& C(m)\varepsilon_0-\mathcal R(\varepsilon_i,0).
\end{eqnarray*}

Estimating
\begin{equation*}
    2x^\alpha u_{i,\alpha\beta}u_{i,\beta}+4\abs{\nabla u_i}^2=2(u_{i,\beta}+x^\alpha u_{i,\alpha\beta})u_{i,\beta}+2\abs{\nabla u_i}^2\geq -\abs{u_{i,\beta}+x^\alpha u_{i,\alpha\beta}}^2,
\end{equation*}
we bound
\begin{equation*}
    \sigma_{i,2}(\rho)\geq -\rho^{3-m}\int_{\partial B_{\rho}}(\abs{u_{i,\beta}+x^\alpha u_{i,\alpha\beta}^2}+4\rho^{-2}\abs{x^\alpha u_{i,\alpha}}^2)d\sigma.
\end{equation*}
Therefore,
\begin{equation*}
    \sup_{r/2<\rho<r}\sigma_{i,2}(\rho)\geq -C(m)\varepsilon_0+\mathcal R(\varepsilon_i,0).
\end{equation*}
Now from the monotonicity formula, for a suitable radius in $(r/2,r)$,
\begin{eqnarray*}
    \sigma_{i,1}(\rho)&\leq& \sigma_{i}(\rho)-\sigma_{i,2}(\rho)\\
    &\leq& \sigma_{i}(1)-\mathcal R(\varepsilon_i,\rho) +C(m)\varepsilon_0 -\mathcal R(\varepsilon_i,0).
\end{eqnarray*}
Noticing (\ref{eqn:R0}) and the fact that $\lim_{i\to \infty} \mathcal R(\varepsilon_i,\rho)=0$,  we have by letting $i\to \infty$
\begin{equation*}
    \rho^{4-m}\int_{B_{\rho}}\abs{\triangle u}^2 dx\leq C(m)\varepsilon_0.
\end{equation*}
Thus, we finish the proof of the Claim for $y\in E_1\cap E_2\cap
E_3$. Since the Lebesgue measure of the complement of $E_1\cap
E_2\cap E_3$ is zero, we prove our claim.
\end{proof}

\section{Further results on the monotonicity formula}\label{sec:further}

In this section, let  $\sigma_{i,1}(r)$, $\sigma_{i,2}(r)$ and
$\sigma_i(r)$ be defined in Section 2. For simplicity, we denote

\[\sigma_1(r):=\liminf_{i\to \infty}
\sigma_{i,1}(r),\quad \sigma_2(r)=\liminf_{i\to
\infty}\sigma_{i,2}(r) ,\quad  \sigma(r)  :=\liminf_{i\to \infty}
\sigma_i(r).
 \]
The main purpose of this section is to show that
for $\mathcal H^{n-4}-$a.e. $y\in \Omega$, the limit $\lim_{r\to
0}\sigma_1(r)$ exists. We will use this result to show that the
defect measure is rectifiable.

The following is a lemma which will be used many times in this
section. Although it may be well known,  we would like to give a
proof here for the
 completeness.

\begin{lem}
Let $f_i$ be a sequence of nonnegative integrable functions on
$B_1$. For each $r_1,r_2>0$, there exists a constant $\rho\in
[r_1,r_2]$ such that
\begin{equation*}
    (r_2-r_1)\liminf _{i\to \infty} \int_{\partial B_\rho} f_i dx\leq 2 \liminf_{i\to \infty} \int_{B_{r_2}} f_i dx.
\end{equation*}
\end{lem}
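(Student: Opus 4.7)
The plan is to reduce this to the standard slicing/Fubini identity
\[
\int_{r_1}^{r_2}\int_{\partial B_r} f_i\, d\sigma\, dr = \int_{B_{r_2}\setminus B_{r_1}} f_i\, dx \leq \int_{B_{r_2}} f_i\, dx,
\]
combined with Fatou's lemma and a Chebyshev-type averaging. First I would introduce the auxiliary function
\[
g(r) := \liminf_{i\to\infty}\int_{\partial B_r} f_i\, d\sigma, \quad r\in [r_1,r_2],
\]
which is a nonnegative measurable function of $r$ (each $r\mapsto \int_{\partial B_r}f_i\,d\sigma$ is measurable by Fubini, and the liminf of measurable functions is measurable).

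Next I would apply Fatou's lemma to the slicing identity above: since the integrands are nonnegative,
\[
\int_{r_1}^{r_2} g(r)\, dr \leq \liminf_{i\to\infty}\int_{r_1}^{r_2}\int_{\partial B_r} f_i\, d\sigma\, dr \leq \liminf_{i\to\infty}\int_{B_{r_2}} f_i\, dx =: M.
\]
Now a Chebyshev-type estimate finishes the argument: the set
\[
E = \bigl\{ r\in[r_1,r_2] : g(r) > 2M/(r_2-r_1)\bigr\}
\]
has Lebesgue measure at most $M\cdot (r_2-r_1)/(2M) = (r_2-r_1)/2$, so its complement in $[r_1,r_2]$ has positive measure. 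Any $\rho$ in this complement satisfies $g(\rho)\leq 2M/(r_2-r_1)$, which is precisely the claimed inequality.

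There is no serious obstacle here; the only point requiring care is that one cannot choose a $\rho$ depending on $i$ and then pass to the limit, so Fatou must be applied \emph{before} selecting the radius. This is exactly why the argument proceeds by first averaging $g(r)$ over the annulus and then invoking the Chebyshev pigeonhole on the averaged quantity. If one wants the slightly stronger conclusion that a set of radii of positive measure works, the same proof delivers it.
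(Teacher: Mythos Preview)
Your proof is correct and follows essentially the same approach as the paper: both hinge on applying Fatou's lemma to the sliced quantity $g(r)=\liminf_i\int_{\partial B_r}f_i$ and then comparing its integral over $[r_1,r_2]$ with $\liminf_i\int_{B_{r_2}}f_i$. The paper phrases the final step as a contradiction (assume the inequality fails for every $\rho$, integrate, and compare), whereas you phrase it directly via a Chebyshev-type pigeonhole; these are the same argument in contrapositive form.
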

\begin{proof}
    If otherwise, then
    \begin{equation*}
        \liminf_{i\to \infty} \int_{\partial B_\rho} f_idx > \frac{2}{r_2-r_1}\liminf_{i\to \infty}\int_{B_{r_2}}f_i
        dx
    \end{equation*}
    for almost all $\rho\in [r_1,r_2]$.
    Integrating both sides in $\rho$ over $[r_1,r_2]$, we obtain
    \begin{equation*}
        \int_{r_1}^{r_2}\liminf_{i\to \infty}\int_{\partial B_\rho} f_i dx > 2\liminf_{i\to \infty} \int_{B_{r_2}}f_i dx.
    \end{equation*}
On the other hand,  by  Fatou's lemma, we have
    \begin{equation*}
        \int_{r_1}^{r_2}\liminf_{i\to \infty}\int_{\partial B_\rho} f_i dx \leq \liminf_{i\to \infty} \int_{B_{r_2}\setminus B_{r_1}} f_i dx.
    \end{equation*}
    This is a contradiction.
\end{proof}

\begin{lem}\label{lem:41} Suppose that $K$ is a compact set in $\Omega$ and $d$ is the distance from $K$ to $\partial \Omega$. For every $x\in K$,
    \begin{equation*}
        \sigma_1(r)\leq C(d),
    \end{equation*}
    when $r<d/10$.
\end{lem}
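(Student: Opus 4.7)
The plan is to combine the monotonicity formula of Theorem~\ref{thm:mono} with two applications of Lemma~3.1, one at an outer scale and one at an inner scale, to transfer the elementary bound on $\sigma_i$ at a large radius to a bound on $\sigma_{i,1}$ at any $r<d/10$. First I fix an outer good radius: minimality gives $\mathbb H_{\varepsilon_i}(u_i)\le \mathbb H_{\varepsilon_i}(u_0)\le C$, which combined with Lemma~\ref{lem:hong} and the interior Calder\'on--Zygmund estimate applied to $u_i-u_0\in W_0^{2,2}$ yields $\int_\Omega(|\nabla^2 u_i|^2+|\nabla u_i|^2)\,dx\le C$. Applying Lemma~3.1 with $f_i=|\nabla^2 u_i|^2+|\nabla u_i|^2$ on $[d/5,d/2]$ produces, along a subsequence, $R\in[d/5,d/2]$ with $\int_{\partial B_R(x)}f_i\,d\sigma\le C(d)$. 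Cauchy--Schwarz bounds $|\sigma_{i,2}(R)|\le C(d)$, and the crude scaling $\sigma_{i,1}(R)\le R^{4-m}\mathbb H_{\varepsilon_i}(u_i)\le C(d)$ gives $|\sigma_i(R)|\le C(d)$. Theorem~\ref{thm:mono} then yields $\sigma_i(\rho)\le \sigma_i(R)+|Q_i(\rho,R)|$ for $\rho\in(0,R)$, and $|Q_i(\rho,R)|\to 0$ as $i\to\infty$ for fixed $\rho>0$ (Lemma~\ref{lem:hong}), so $\limsup_i\sigma_i(\rho)\le C(d)$ for every $\rho\in(0,R)$.

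To extract $\sigma_{i,1}$ from $\sigma_i=\sigma_{i,1}+\sigma_{i,2}$, I pick a good inner radius $\rho_i\in[r,2r]$ at which $\sigma_{i,2}(\rho_i)$ is bounded below. The pointwise inequality $2x^\alpha u_{i,\alpha\beta}u_{i,\beta}+4|\nabla u_i|^2\ge -|u_{i,\beta}+x^\alpha u_{i,\alpha\beta}|^2$, already used in the proof of Theorem~\ref{thm:first}, gives
\[-\sigma_{i,2}(\rho)\le \rho^{3-m}\int_{\partial B_\rho}\bigl(|u_{i,\beta}+x^\alpha u_{i,\alpha\beta}|^2+4\rho^{-2}|x^\alpha u_{i,\alpha}|^2\bigr)\,d\sigma.\]
Integrating this in $\rho$ over $[r,2r]$ and using $|x|^{3-m}\le 2r|x|^{2-m}$ on the annulus, the right-hand side is bounded by a constant multiple of $rP_i(r,2r)$, where $P_i\ge 0$ is the first positive integrand of Theorem~\ref{thm:mono}. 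Since $P_i(r,2r)=\sigma_i(2r)-\sigma_i(r)+|Q_i(r,2r)|\le C(d)+o(1)$ once $\sigma_i$ has been controlled two-sidedly on $[r,2r]$, a mean value argument gives $\rho_i\in[r,2r]$ with $\sigma_{i,2}(\rho_i)\ge -C(d)$. Hence $\sigma_{i,1}(\rho_i)\le \sigma_i(\rho_i)-\sigma_{i,2}(\rho_i)\le C(d)$, and monotonicity of the unscaled integral in the radius gives $\sigma_{i,1}(r)\le(\rho_i/r)^{m-4}\sigma_{i,1}(\rho_i)\le 2^{m-4}\sigma_{i,1}(\rho_i)\le C(d)$; taking $\liminf_{i\to\infty}$ completes the proof.

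The main obstacle is the partial circularity in bounding $P_i(r,2r)$: via the monotonicity identity this hinges on a lower bound for $\sigma_i(r)$, which in turn depends on $\sigma_{i,2}(r)$, the very quantity one is trying to control. I expect to resolve this by a short bootstrap across the dyadic scales $r,2r,4r,\ldots$, exploiting $\sigma_{i,1}\ge 0$ and the uniform upper bound $\sigma_i(\rho)\le C(d)$ obtained above to absorb any negative excursion of $\sigma_{i,2}$.
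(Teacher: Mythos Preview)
Your identification of the circularity at the end is correct, and unfortunately the bootstrap you propose does not resolve it. The difficulty is that controlling $P_i(r,2r)$ via the monotonicity identity requires a \emph{lower} bound on $\sigma_i(r)=\sigma_{i,1}(r)+\sigma_{i,2}(r)$; since $\sigma_{i,2}(r)$ is exactly the boundary term you are trying to control, this is genuinely circular. Iterating over dyadic scales $r,2r,4r,\ldots$ does not help, because at every step the missing input is a lower bound on $\sigma_i$ at the \emph{smaller} radius, which is precisely what has not yet been established; there is no contractive factor that would allow the iteration to close. Trying instead to use $P_i(r,2r)\le P_i(0,R)$ fails as well: there is no a~priori reason why $P_i(0,R)$ should be finite---let alone uniformly bounded in $i$---for \emph{every} $x\in K$, since the weight $|x|^{2-m}$ is too singular to be integrated against $|\nabla u_i|^2$ for a general $W^{2,2}$ function.

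The paper breaks the circle by a different mechanism. Instead of bounding $-\sigma_{i,2}(\rho)$ through $P_i$, it applies Young's inequality with a free small parameter $\eta$ on a good slice $\rho_r\in[r,3r/2]$ to obtain
\[
|\sigma_2(\rho_r)|\le \eta\, r^{4-m}\!\int_{B_{3r/2}}|\nabla^2 u_i|^2\,dx + C(\eta)\,r^{2-m}\!\int_{B_{3r/2}}|\nabla u_i|^2\,dx,
\]
and then controls the second term by Nirenberg's interpolation inequality together with $|u_i|=1$:
\[
r^{2-m}\!\int_{B_{3r/2}}|\nabla u_i|^2\,dx \le C\Bigl(r^{4-m}\!\int_{B_{3r/2}}|\nabla^2 u_i|^2\,dx\Bigr)^{1/2}+C.
\]
Setting $\delta(r)=\sigma_1(r)+\liminf_i r^{2-m}\int_{B_r}|\nabla u_i|^2$ and using $\sigma(\rho_r)\le\sigma(\tilde r)\le C(d)$ from monotonicity, one obtains after choosing $\eta$ small and absorbing the square-root term
\[
\delta(r)\le \tfrac{3}{4}\,\delta(2r)+C(d),
\]
which iterates to $\delta(r)\le C(d)$. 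The essential gain over your approach is the combination of the square root in Nirenberg's inequality with the freely chosen small constant $\eta$: this produces a genuine contraction, whereas controlling $\sigma_{i,2}$ through $P_i$ only yields a linear relation back to the unknown quantity and no iteration is possible.
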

\begin{proof}
    Assume that $x$ is the origin. Since the total energy is bounded,
    \begin{equation*}
        \sigma_1(d/2)\leq C(d).
    \end{equation*}
    As in the proof of Theorem \ref{thm:first}, there exists $\tilde{r}\in [d/4,d/2]$ such that
    \begin{equation}\label{tilder}
        \sigma(\tilde{r})\leq C(d).
    \end{equation}
For each $r$ wiith $0<r<d/8$,  using Lemma 3.1, there exists
$\rho_r\in [r,3r/2]$ such that
\begin{eqnarray}\label{eqn:theone}
    & &\abs{\sigma_2(\rho_r) }
    \leq   \liminf_{i\to \infty}C \rho_r^{3-m}\int_{\partial B_{\rho_r}} \rho_r \abs{\nabla u_i}\abs{\nabla^2 u_i} + \abs{\nabla u_i}^2 d\sigma \\
    \nonumber
&\leq& \liminf _{i\to \infty}Cr^{2-m}\int_{B_{3r/2}} r\abs{\nabla u_i}\abs{\nabla^2 u_i}+\abs{\nabla u_i}^2 dx  \\ \nonumber
    &\leq& \liminf_{i\to \infty} \eta r^{4-m} \int_{B_{3r/2}}\abs{\nabla^2 u_i}^2 dx +C(\eta) r^{2-m}\int_{B_{3r/2}} \abs{\nabla u_i}^2
    dx
\end{eqnarray}
for a constant $\eta$ which will be fixed later. Set
\begin{equation*}
    \delta(r)=\sigma_1(r)+\liminf_{i\to \infty}r^{2-m}\int_{B_r}\abs{\nabla u_i}^2 dx.
\end{equation*}
By an interpolation inequality of Nirenberg \cite{Nir}, we have
\begin{eqnarray}\label{eqn:goodpoint}
    &&\liminf_{i\to \infty}r^{2-m}\int_{B_{3r/2}}\abs{\nabla u_i}^2 dx \\ \nonumber
    &\leq&  \liminf_{i\to \infty} C \left( r^{4-m}\int_{B_{3r/2}} \abs{\nabla u_i}^4 dx \right)^{1/2}   \\ \nonumber
    &\leq& \liminf_{i\to \infty} C \norm{u_i}_{L^\infty}\left( r^{4-m}\int_{B_{3r/2}} \abs{\nabla^2 u_i}^2 dx \right)^{1/2} +C\norm{u_i}_{L^\infty}^2 \\ \nonumber
    &\leq& C (\delta (2r))^{1/2}+ C.
\end{eqnarray}
Letting $i$ go to infinity in the monotonicity formula
(\ref{eqn:kaka}), we obtain
\begin{equation*}
    \sigma_1(\rho_r)+\sigma_2(\rho_r)\leq \sigma(\tilde{r}).
\end{equation*}
Hence,
\begin{eqnarray*}
    \delta(r) &\leq & \sigma_1(r)+\liminf_{i\to \infty}r^{2-m} \int_{B_r}\abs{\nabla u_i}^2 dx \\
    &\leq& C\sigma_1(\rho_r) +\liminf_{i\to \infty}r^{2-m} \int_{B_r}\abs{\nabla u_i}^2 dx\\
    &\leq& C\sigma(\tilde{r})+C\abs{\sigma_2(\rho_r)} +\liminf_{i\to \infty} r^{2-m}\int_{B_r}\abs{\nabla u_i}^2 dx\\
    &\leq& C\sigma(\tilde{r})+\liminf_{i\to \infty} C \eta r^{4-m}\int_{B_{3r/2}} \abs{\nabla^2 u_i}^2 dx +C(\eta) r^{2-m}\int_{B_{3r/2}} \abs{\nabla u_i}^2 dx \\
    &\leq& C\sigma(\tilde{r})+C\eta\delta(2r) +\liminf_{i\to \infty} C(\eta) r^{2-m}\int_{B_{3r/2}} \abs{\nabla u_i}^2 dx.
\end{eqnarray*}
 By choosing $\eta$ sufficiently small, we have
\begin{eqnarray*}
    \delta(r)
    &\leq& C\sigma(\tilde{r})+ \frac{1}{2} \sigma_1(2r) +C\liminf_{i\to \infty} r^{2-m}\int_{B_{2r}} \abs{\nabla u_i}^2 dx \\
    &\leq& \frac{1}{2}\delta(2r)+C\delta(2r)^{1/2} +C(d) \\
    &\leq& \frac{3}{4}\delta(2r)+C(d).
\end{eqnarray*}
An  iteration argument yields
\begin{equation*}
    \sigma_1(r)\leq \delta(r)\leq C(d).
\end{equation*}
\end{proof}

Set
\begin{equation*}
    E=\left\{x_0\in \Omega|\quad \limsup_{r\to 0} r^{4-m}\int_{B_r(x_0)} \abs{\nabla u}^4 dx > 0\right\}.
\end{equation*}
By Corollary 3.2.3 in \cite{Ziemer}, $\mathcal H^{m-4}(E)=0$. From now on, pick $y\notin E$ and assume without loss of generality it is the origin.
\begin{lem}\label{lem:42}
    For $y\notin E$, the limit
    \begin{equation*}
        \lim_{r\to 0} \sigma(r)
    \end{equation*}
    exists and is nonnegative.
\end{lem}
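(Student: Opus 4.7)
The plan is to exploit the monotonicity formula of Theorem \ref{thm:mono} by absorbing its wrong-sign remainder $Q$ into a new auxiliary quantity that is genuinely non-decreasing, and then show that this absorption is a vanishing correction for a.e.\ $y$. Specifically, I would set
\[
\tilde{\sigma}_i(r) := \sigma_i(r) + (m-3)\varepsilon_i \int_0^r \tau^{3-m}\int_{B_\tau(y)}|\nabla u_i|^{m+1}\,dx\,d\tau.
\]
Since $Q=\varepsilon_i(3-m)\int_\rho^r \tau^{3-m}\int_{B_\tau}|\nabla u_i|^{m+1}dx\,d\tau$, Theorem \ref{thm:mono} yields $\tilde\sigma_i(r)-\tilde\sigma_i(\rho)=P\ge 0$ for $0<\rho<r$. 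Hence each $\tilde\sigma_i$ is non-decreasing in $r$, and so is $\liminf_{i\to\infty}\tilde\sigma_i(r)$.

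Next I would argue that the added term is negligible for Lebesgue a.e.\ $y$. Passing (if necessary) to a subsequence as in the proof of Theorem \ref{thm:first}, we may assume $T(x):=\sum_i 2^i\varepsilon_i|\nabla u_i|^{m+1}\in L^1(\Omega)$. Using $\varepsilon_i|\nabla u_i|^{m+1}\le 2^{-i}T$ and Fubini, I obtain
\[
0\le (m-3)\varepsilon_i\int_0^r \tau^{3-m}\int_{B_\tau(y)}|\nabla u_i|^{m+1}\,dx\,d\tau \le \frac{m-3}{(m-4)2^i}\int_{B_r(y)} T(x)|x-y|^{4-m}\,dx,
\]
and for a.e.\ $y$ the Riesz potential on the right is finite. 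Restricting to such $y$, the correction tends to $0$ in $i$, uniformly in $r\in(0,r_0]$, so $\sigma(r)=\liminf_i\sigma_i(r)=\liminf_i\tilde\sigma_i(r)$. Consequently $\sigma$ is non-decreasing in $r$, and $\lim_{r\to 0}\sigma(r)$ exists.

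For nonnegativity, I would use Lemma 3.1 with $f_i=\rho|\nabla u_i||\nabla^2 u_i|+|\nabla u_i|^2$ on $[r,3r/2]$ to select a good radius $\rho_r$ at which, exactly as in (\ref{eqn:theone}),
\[
|\sigma_2(\rho_r)| \le \eta \liminf_i r^{4-m}\int_{B_{3r/2}}|\nabla^2 u_i|^2\,dx + C(\eta)\liminf_i r^{2-m}\int_{B_{3r/2}}|\nabla u_i|^2\,dx.
\]
By Lemma \ref{lem:41}, the first term is at most $\eta C(d)$. For the second, Rellich's compactness (from $W^{2,2}\hookrightarrow W^{1,2}_{\mathrm{loc}}$) gives $\liminf_i r^{2-m}\int_{B_{3r/2}}|\nabla u_i|^2 = r^{2-m}\int_{B_{3r/2}}|\nabla u|^2$, and H\"older with $|u|=1$ dominates this by $C(r^{4-m}\int_{B_{3r/2}}|\nabla u|^4)^{1/2}$, which vanishes as $r\to 0$ because $y\notin E$. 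Sending $r\to 0$ and then $\eta\to 0$ shows $\sigma_2(\rho_r)\to 0$. Since $\sigma(r)\ge \sigma_1(r)+\sigma_2(r)\ge \sigma_2(r)$ by superadditivity of $\liminf$ and $\sigma_1\ge 0$, we get $\liminf_{r\to 0}\sigma(r)\ge 0$, which together with the non-decreasing limit already established yields $\lim_{r\to 0}\sigma(r)\ge 0$.

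The main obstacle is the $(3-m)$-factor in the remainder $Q$, which ruins a direct monotonicity statement; the trick of absorbing $Q$ into $\tilde\sigma_i$ and then killing it pointwise at each scale via the $L^1$-dominant $T$ from the proof of Theorem \ref{thm:first} is the critical step. A secondary subtlety is that $\liminf$ does not commute with sums, but this is only used through the one-sided inequality $\sigma \ge \sigma_1+\sigma_2$, which is exactly the superadditivity of $\liminf$ and suffices for the nonnegativity conclusion.
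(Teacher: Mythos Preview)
Your nonnegativity argument is essentially the same as the paper's: choose good radii $\rho_k\in[r_k,2r_k]$ via the slicing Lemma~3.1, bound $|\sigma_2(\rho_k)|$ by a product involving $\sigma_1$ (controlled by Lemma~\ref{lem:41}) and the scaled $|\nabla u|^2$-integral (which vanishes since $y\notin E$), and conclude $\sigma(\rho_k)\ge\sigma_1(\rho_k)+\sigma_2(\rho_k)\ge\sigma_2(\rho_k)\to 0$.

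Where your proof differs from the paper is the monotonicity step, and here you take an unnecessary detour that creates a genuine gap. The paper simply observes that for \emph{fixed} $0<\rho<r$ one has
\[
|Q_i|\;\le\;(m-3)\Bigl(\int_\rho^r\tau^{3-m}\,d\tau\Bigr)\,\varepsilon_i\!\int_{\Omega}|\nabla u_i|^{m+1}\,dx\;\longrightarrow\;0,
\]
since $\tau^{3-m}$ is bounded on $[\rho,r]$ and $\varepsilon_i\int|\nabla u_i|^{m+1}\to 0$ by Lemma~\ref{lem:hong}. Taking $\liminf_i$ in (\ref{eqn:kaka}) then gives $\sigma(\rho)\le\sigma(r)$ for \emph{every} center $y$. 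No $\tilde\sigma_i$, no Riesz potential, no subsequence with dominant $T$ is needed.

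Your construction instead requires $\int_0^r\tau^{3-m}\int_{B_\tau(y)}|\nabla u_i|^{m+1}\,dx\,d\tau<\infty$, which you reduce to finiteness of the Riesz potential $I_4T(y)=\int T(x)|x-y|^{4-m}\,dx$. This is only guaranteed for Lebesgue-a.e.\ $y$, not for every $y\notin E$. Hence you prove the conclusion only on $E^c\cap\{I_4T<\infty\}$, which is strictly weaker than the lemma as stated. This matters downstream: Theorem~\ref{thm:15} needs the limit for \emph{all} $y\notin E$ in order to get existence of the density $\mathcal H^{m-4}$-a.e.\ (since $\mathcal H^{m-4}(E)=0$), and the set $\{I_4T=\infty\}$ for $T\in L^1$ is not known to have $\mathcal H^{m-4}$-measure zero. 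The fix is trivial: drop the $\tilde\sigma_i$ construction and argue as above that $Q_i\to 0$ for fixed $\rho,r$.
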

\begin{proof}
    By (\ref{eqn:kaka}), $\sigma(r)$ is non-increasing (as $r\to 0$), so it suffices to show that for some sequence of $\rho_k$ going to zero, $\sigma(\rho_k)$ has a lower bound. Take any sequence $r_k$ going to zero. For each $r_k$, there is a good radius $\rho_k\in [r_k,2r_k]$ such that as in (\ref{eqn:theone})
\begin{eqnarray*}\label{eqn:thetwo}
    && \abs{\sigma_2(\rho_k) } \leq  \liminf_{i\to \infty}C \rho_k^{3-m}\int_{\partial B_{\rho_k}} \rho_k \abs{\nabla u_i}\abs{\nabla^2 u_i} + \abs{\nabla u_i}^2 d\sigma \\
    &\leq& \liminf _{i\to \infty}Cr_k^{2-m}\int_{B_{2r_k}} r_k\abs{\nabla u_i}\abs{\nabla^2 u_i}+\abs{\nabla u_i}^2 dx  \\
    &\leq& C \left( \liminf_{i\to \infty} r_k^{4-m}\int_{B_{2r_k}}\abs{\nabla^2 u_i}^2 dx \right)^{1/2}\left( \liminf_{i\to \infty} r_k^{2-m}\int_{B_{2r_k}}\abs{\nabla u_i}^2 dx \right)^{1/2} \\
    && + C \liminf_{i\to \infty} r_k^{2-m}\int_{B_{2r_k}}\abs{\nabla u_i}^2 dx.
\end{eqnarray*}
By our choice of $y$, we note
\begin{equation}\label{eqn:choice}
    \lim_{k\to \infty}\liminf_{i\to \infty} r_k^{2-m}\int_{B_{2r_k}}\abs{\nabla u_i}^2 dx=\lim_{k\to \infty}r^{2-m}_k \int_{B_{2r_k}} \abs{\nabla u}^2 dx=0.
\end{equation}
Combing this with Lemma \ref{lem:41} yields
\begin{equation*}
    \lim_{k\to \infty} \sigma_2(\rho_k)=0.
\end{equation*}
Due to the monotonicity of $\sigma(r)$,
\begin{equation*}
    \lim_{r\to 0} \sigma(r) =\lim_{k\to \infty} \sigma(\rho_k)=\lim_{k\to \infty} \sigma_1(\rho_k)\geq 0.
\end{equation*}
\end{proof}

The following theorem is the main result of this section.
\begin{thm}\label{thm:15}
    For all $y\notin E$, the limit
    \begin{equation*}
        \lim_{r\to 0} \sigma_1(r)
    \end{equation*}
    exists and
    \begin{equation*}
        \lim_{r\to 0} \sigma_1(r)=\lim_{r\to 0} \sigma(r).
    \end{equation*}
\end{thm}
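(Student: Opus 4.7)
The plan is to upgrade Lemma \ref{lem:42}---which gives $\sigma_1(\rho_k) \to \ell := \lim_{r \to 0}\sigma(r)$ along one good sequence $\rho_k \to 0$---to the stronger conclusion that $\sigma_1(r) \to \ell$ along \emph{every} sequence $r \to 0$. The key enabler is an elementary near-monotonicity of $\sigma_{i,1}$: since
\begin{equation*}
  f_i(r) := \int_{B_r} \abs{\triangle u_i}^2 + \varepsilon_i \abs{\nabla u_i}^{m+1}\,dx
\end{equation*}
is non-decreasing in $r$, we have for all $0 < r \leq r'$
\begin{equation*}
    \sigma_{i,1}(r) = r^{4-m} f_i(r) \leq r^{4-m} f_i(r') = (r'/r)^{m-4} \sigma_{i,1}(r'),
\end{equation*}
and applying the same inequality to the pair $(r'',r)$ with $r'' \leq r$ yields $\sigma_{i,1}(r) \geq (r''/r)^{m-4}\sigma_{i,1}(r'')$. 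The multiplicative factors are independent of $i$, so taking $\liminf_i$ preserves both bounds, producing the sandwich
\begin{equation*}
    (r''/r)^{m-4} \sigma_1(r'') \leq \sigma_1(r) \leq (r'/r)^{m-4} \sigma_1(r') \quad \text{for all } 0 < r'' \leq r \leq r'.
\end{equation*}

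The main technical step will be the following variant of Lemma \ref{lem:42}: for every sequence $s_n \to 0$ and every $\delta \in (0,1)$, there exist $\rho_n \in [s_n, (1+\delta) s_n]$ with $\sigma_1(\rho_n) \to \ell$. I would prove this by repeating the argument of Lemma \ref{lem:42} verbatim but applying Lemma \ref{lem:41} on the interval $[s_n, (1+\delta) s_n]$ of length $\delta s_n$ instead of $[s_n, 2s_n]$; the good-radius constant picks up a factor $1/\delta$, but since $\delta$ is fixed and the interior integrals $s_n^{2-m}\int_{B_{(1+\delta)s_n}} \abs{\nabla u_i}^2 \,dx$ still vanish as $n \to \infty$ (this is precisely where $y \notin E$ enters, via (\ref{eqn:choice})), one still obtains $\sigma_2(\rho_n) \to 0$, and then $\sigma_1(\rho_n) \to \ell$ by the same reasoning.

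With this strengthening in hand, fix an arbitrary sequence $r_n \to 0$ and $\delta \in (0,1)$. Applying the variant with $s_n = r_n$ produces $\rho_n^+ \in [r_n, (1+\delta)r_n]$ with $\sigma_1(\rho_n^+) \to \ell$; the upper sandwich then yields $\sigma_1(r_n) \leq (1+\delta)^{m-4} \sigma_1(\rho_n^+)$, so that $\limsup_n \sigma_1(r_n) \leq (1+\delta)^{m-4}\ell$. Likewise, applying the variant with $s_n = r_n/(1+\delta)$ produces $\rho_n^- \in [r_n/(1+\delta), r_n]$ with $\sigma_1(\rho_n^-) \to \ell$, and the lower sandwich gives $\sigma_1(r_n) \geq (1+\delta)^{-(m-4)} \sigma_1(\rho_n^-)$, so $\liminf_n \sigma_1(r_n) \geq (1+\delta)^{-(m-4)}\ell$. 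Letting $\delta \to 0$ forces $\lim_n \sigma_1(r_n) = \ell$, and since $r_n$ was arbitrary, $\lim_{r \to 0} \sigma_1(r) = \ell$. The main obstacle is the identity $\sigma_1(\rho_n) = \sigma(\rho_n) - \sigma_2(\rho_n) + o(1)$ invoked in the variant of Lemma \ref{lem:42}, since the three $\liminf_i$ need not be attained along a common subsequence; I expect to handle this by extracting a joint subsequence along which all of $\sigma_{i,1}(\rho_n)$, $\sigma_{i,2}(\rho_n)$, $\sigma_i(\rho_n)$ converge, using the pointwise identity $\sigma_i = \sigma_{i,1} + \sigma_{i,2}$, and invoking the uniform smallness of $\abs{\sigma_{i,2}(\rho_n)}$ guaranteed by the good-radius choice---exactly as in the proof of Lemma \ref{lem:42}.
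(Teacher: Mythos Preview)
Your proposal is correct and follows essentially the same route as the paper: the same sandwich inequality $(r''/r)^{m-4}\sigma_1(r'')\le\sigma_1(r)\le(r'/r)^{m-4}\sigma_1(r')$, the same good-radius selection on short intervals around $r_k$, and the same use of $y\notin E$ to kill $\sigma_2$. The only cosmetic difference is that the paper chooses the interval width adaptively---picking $\theta_k\to 0$ slowly enough that $\theta_k^{-2}\,r_k^{2-m}\!\int_{B_{2r_k}}|\nabla u_i|^2\to 0$---so the squeeze factors $(r_k/\rho_k)^{4-m}$ and $(r_k/\rho_k')^{4-m}$ tend to $1$ in a single pass, whereas you keep $\delta$ fixed and send $\delta\to 0$ afterward; both yield the same conclusion. (One small slip: the good-slice lemma you invoke on $[s_n,(1+\delta)s_n]$ is the unlabelled Lemma~3.1, not Lemma~\ref{lem:41}.)
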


\begin{proof}
    It suffices to show that for any sequence $r_k$ going to zero,
    \begin{equation*}
        \lim_{k\to \infty}\sigma_1(r_k)=\lim_{r\to 0}\sigma(r).
    \end{equation*}
    Let $\theta_k$ be a sequence of positive number in $(0,1/2)$ to be determined later. Using (3.4) and Lemma 3.1,  there exists
     a constant $\rho_k\in [r_k,r_k(1+\theta_k)]$ such that
\begin{eqnarray*}
    && \abs{\sigma_2(\rho_k) }\\
    &\leq & \liminf_{i\to \infty}C \rho_k^{3-m}\int_{\partial B_{\rho_k}} \rho_k \abs{\nabla u_i}\abs{\nabla^2 u_i} + \abs{\nabla u_i}^2 d\sigma \\
    &\leq& \liminf_{i\to \infty} C\theta_k^{-1} r_k^{2-m}\int_{B_{(1+\theta_k)r_k}} r_k\abs{\nabla u_i}\abs{\nabla^2 u_i}+\abs{\nabla u_i}^2 dx   \\
    &\leq& C \left( \liminf_{i\to \infty} r_k^{4-m}\int_{B_{2r_k}}\abs{\nabla^2 u_i}^2 dx \right)^{1/2}\left( \theta_k^{-2}\liminf_{i\to \infty} r_k^{2-m}\int_{B_{2r_k}}\abs{\nabla u_i}^2 dx \right)^{1/2} \\
    && + C \theta_k^{-1}\liminf_{i\to \infty} r_k^{2-m}\int_{B_{2r_k}}\abs{\nabla u_i}^2 dx  .
\end{eqnarray*}
Since $y\notin E$, which implies that (\ref{eqn:choice}) is true,
we can choose $\theta_k$ going to zero so that
\begin{equation*}
    \lim_{k\to \infty} \sigma_2(\rho_k)=0.
\end{equation*}
As in Lemma \ref{lem:42}, we see
\begin{equation*}
    \lim_{k\to \infty} \sigma_1(\rho_k)=\lim_{r\to 0} \sigma(r).
\end{equation*}
By the same reason, we can find a sequence $\rho'_k \in [r_k(1-\theta_k),r_k]$ such that
\begin{equation*}
    \lim_{k\to \infty} \sigma_1(\rho'_k)=\lim_{r\to 0} \sigma(r).
\end{equation*}
However,
\begin{align*}
   (\frac{r_k}{\rho_k'})^{4-m}(\rho_k')^{4-m}\int_{B_{\rho_k'}} \abs{\triangle u_i}^2 dx&\leq r_k^{4-m}\int_{B_{r_k}} \abs{\triangle u_i}^2 dx\\
   &\leq (\frac{r}{\rho_k})^{4-m}\rho_k^{4-m}\int_{B_{\rho_k}} \abs{\triangle u_i}^2
   dx.
\end{align*}
Taking the limit of $i$ going to infinity and then $k$ going to
infinity, we obtain
\begin{equation*}
    \lim_{k\to \infty} \sigma_1(\rho_k')\leq \lim_{k\to \infty}\sigma_1(r_k) \leq \lim_{k\to \infty} \sigma_1(\rho_k).
\end{equation*}
This proves Theorem 3.1.
\end{proof}

One can see from the above  proofs that the same argument works
for a sequence of stationary biharmonic maps. In the following, we
use this observation to prove Theorem \ref{thm:second}.
\begin{proof}[Proof of Theorem  \ref{thm:second}]
    Let $\tilde{u}_i:\Omega\to N$ be a sequence of stationary biharmonic maps
from $\Omega\subset \Real^m$ to compact manifold $N$. Assume that
$\mathbb H(\tilde{u}_i)$ are bounded and $\tilde{u}_i$ converges weakly to $\tilde{u}$.
Set
\begin{equation*}
    \tilde{\mu}=\lim_{i\to \infty} \abs{\triangle \tilde{u}_i}^2 dx=\abs{\triangle \tilde{u}}^2 dx +\tilde{\nu},
\end{equation*}
where $\tilde{\nu}$ is the defect measure. According to Theorem 3.4 in \cite{Scheven}, $\tilde{\nu}$ is supported in $\tilde{\Sigma}$ defined as the set of points $a\in \overline{B_1}$ with
\begin{equation*}
    \liminf_{\rho\to 0} \left( \rho^{4-m}\int_{B_\rho(a)}(\abs{\triangle \tilde{u}}^2+\rho^{-2}\abs{\nabla \tilde{u}}^2)dx +\rho^{4-m}\tilde{\nu} (B_\rho(a)) \right)\geq \varepsilon_0,
\end{equation*}
where $\varepsilon_0$ is given in Corollary 2.7 of the same paper.
Moreover,      Scheven in \cite{Scheven} showed that $\tilde{\nu}$
is absolutely continuous with respect to $\mathcal
H^{m-4}\llcorner \tilde{\Sigma}$. The same proof as Theorem
\ref{thm:15} implies that
\begin{equation*}
    \lim_{r\to 0}r^{-1}\tilde{\nu}(B_r(x))
\end{equation*}
exists for $\mathcal H^{m-4}-$a.e. $x\in B_1$. Hence, by Preiss's result \cite{Big}, $\tilde{\nu}$ is $(m-4)-$rectifiable, which implies that $\tilde{\Sigma}$ is $(m-4)-$rectifiable.
\end{proof}
\section{Partially Strong Convergence and the rectifiability of the defect measure}\label{sec:two}

In this section, we pick a sequence of $\varepsilon_i$ going to
zero and write $u_i$ for $u_{\varepsilon_i}$. As proved in Lemma
\ref{lem:hong}, $u_i$ is a minimizing sequence for ${\mathbb
H}(u)$ in $W^{2,2}_{u_0}(\Omega,S^n)\cap C^0(\Omega,S^n)$. By
taking a subsequence (still denoted by $u_i)$, we have
\begin{equation*}
    \abs{\triangle u_i}^2dx\wc \mu=\abs{\triangle u}^2dx+\nu
\end{equation*}
in the sense of Radon measures. Since $u_i\wc u$ in
$W^{2,2}(\Omega,S^n)$, $\nu\geq 0$ by the Fatou  lemma. All
results and their proofs in this section depend only on the fact
that $u_i$ is a minimizing sequence of $H(u)$ in the space of
$C^0\cap W^{2,2}$.

The first result of this section is to prove that for each $x_0$
with $B_{R_0} (x_0)\subset \Omega \subset \Real^5$ for some
$R_0>0$, then there is an $\varepsilon_1>0$ such that if $\frac
1{R_0}\mu (B_{R_0}(x_0))<\varepsilon_1$, $\nu |_{B_{\frac
{R_0}2}(x_0)}\equiv 0$.

Our proof is based on an idea of Lin in \cite{Acta}. However, we
are not able to prove this for a dimension $m$ greater than $5$.
For the proof, we need a lemma.
\begin{lem} \label{lem:weneed}
    Assume that $\rho$ is a fixed positive constant and $u$ is a smooth map from $\overline{B_{\rho}}$ to $S^n$. Then there exists a positive number $\eta_1$
    such that for  any positive $\eta<\eta_1$ and $v$ defined by
    \begin{equation}\label{eqn:weneed}
        \left\{
        \begin{array}[]{ll}
            \triangle^2 v=0 & \mbox{ in } B_{\rho}\setminus B_{\rho(1-\eta)}; \\
            v=u & \mbox{ on } \partial B_{\rho}\cup \partial B_{\rho(1-\eta)}; \\
            \frac{\partial v}{\partial n}=\frac{\partial u}{\partial n} & \mbox{ on } \partial B_{\rho}\cup \partial B_{\rho(1-\eta)}
        \end{array}
        \right.
    \end{equation}
    we have
    \begin{equation*}
        \abs{v}\geq \frac{1}{2}
    \end{equation*}
    on $B_{\rho}\setminus B_{\rho(1-\eta)}$.
\end{lem}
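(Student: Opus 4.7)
The plan is to reduce the pointwise lower bound $|v|\ge\tfrac12$ to the uniform smallness of $w := v - u$ on the thin shell. Since $|u|\equiv 1$ on $\overline{B_\rho}$, it suffices to prove $\|w\|_{L^\infty(A_\eta)}\to 0$ as $\eta\to 0$, where $A_\eta := B_\rho\setminus B_{\rho(1-\eta)}$; then $|v|\ge|u|-\|w\|_{L^\infty}\ge\tfrac12$ for all $\eta$ below some $\eta_1$. By scaling we may assume $\rho = 1$. The function $w$ solves
\begin{equation*}
\triangle^2 w = -\triangle^2 u \text{ in } A_\eta,\qquad w = 0,\ \partial w/\partial n = 0 \text{ on } \partial A_\eta,
\end{equation*}
with $\triangle^2 u$ uniformly bounded on $\overline{B_1}$ by smoothness of $u$. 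Testing against $w$ and integrating by parts twice (every boundary term vanishes thanks to the homogeneous Dirichlet and Neumann data on both components of $\partial A_\eta$) yields $\int_{A_\eta}|\triangle w|^2\le\int_{A_\eta}|\triangle u|^2\le C\eta$. Since $\|\triangle w\|_{L^2}=\|\nabla^2 w\|_{L^2}$ for $w\in H^2_0$ and the shell has radial width $\eta$, a Poincar\'e inequality in the radial direction then gives $\|w\|_{L^2(A_\eta)}\le C\eta^2\|\triangle w\|_{L^2}\le C\eta^{5/2}$.

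The upgrade from this $L^2$ bound to a uniform $L^\infty$ bound is the core of the argument, and I would carry it out by contradiction and blow-up. If the conclusion failed, there would exist $\eta_k\downarrow 0$ and $x_k\in A_{\eta_k}$ with $|w_k(x_k)|\ge\tfrac12$; up to rotation and passage to a subsequence, the radial projections $\pi(x_k)\in\partial B_1$ converge to a fixed $x_\infty\in\partial B_1$ and $y_k := \eta_k^{-1}(x_k - \pi(x_k))$ converges to some $y_\infty\in\overline{S}$, where $S := \mathbb R^{m-1}\times[0,1]$. The rescaled functions $\tilde w_k(y) := w_k(\pi(x_k) + \eta_k y)$ live on domains $\tilde A_k$ that converge locally to $S$ as the curvature of $\partial B_1$ flattens at scale $\eta_k$; scale invariance gives $\triangle^2 \tilde w_k = -\eta_k^4\triangle^2 u\to 0$, with $\tilde w_k = \partial_n\tilde w_k = 0$ on both flat sides. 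The rescaled $L^2$ estimate becomes $\|\tilde w_k\|_{L^2(\tilde A_k)}^2\le C\eta_k^{5-m}$, which is uniformly bounded exactly when $m\le 5$. Combined with interior and homogeneous-boundary elliptic regularity for $\triangle^2$, this yields uniform $C^{4,\alpha}_{\mathrm{loc}}$ bounds, and a subsequential limit $\tilde w_\infty$: a bounded biharmonic function on $S$ with zero Dirichlet-Neumann data on both flat boundaries. A Fourier decomposition in the tangential variable reduces each mode to a fourth-order ODE with four homogeneous boundary conditions, whose only solution is zero; hence $\tilde w_\infty\equiv 0$. But passing to the limit in $|\tilde w_k(y_k)| = |w_k(x_k)|\ge\tfrac12$ gives $|\tilde w_\infty(y_\infty)|\ge\tfrac12$, a contradiction.

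The main obstacle is the rigorous execution of the blow-up: the domains $\tilde A_k$ are only approximately slabs, so uniform boundary Schauder estimates for the biharmonic operator with homogeneous Dirichlet-Neumann data must be established across this shrinking-curvature family, which I would do by pulling back through uniformly $C^{k,\alpha}$ diffeomorphisms that simultaneously straighten both boundary components of $\tilde A_k$. The dimensional restriction $m\le 5$, required for the rescaled $L^2$ estimate to stay bounded, is consistent with the standing hypothesis of Theorem~\ref{thm:third}.
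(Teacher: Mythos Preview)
Your proof is correct and follows the same blow-up-by-contradiction strategy as the paper: set $w=v-u$, bound $\int_{A_\eta}|\triangle w|^2\le C\eta$, rescale the thin shell to a slab, and finish with a Liouville theorem on $\mathbb{R}^{m-1}\times[0,1]$. The paper's execution differs in two places that streamline the argument. First, the paper normalizes the blow-up sequence by $\lambda_{\eta_i}:=\max_{A_{\eta_i}}|w_{\eta_i}|$, so the rescaled functions are automatically bounded by $1$ and attain the value $1$ at the origin; this makes your radial Poincar\'e step and the rescaled $L^2$ control superfluous, since boundedness of the limit comes for free and the contradiction point is pinned at $0$. Second, for the Liouville step the paper observes that the Hessian-energy bound rescales to $\int_{D_\infty}|\triangle w|^2<\infty$, integrates by parts against compactly supported $W^{2,2}$ approximants (using the homogeneous Dirichlet and Neumann data) to deduce $\triangle w=0$, and then invokes the maximum principle for bounded harmonic functions with zero Dirichlet boundary---shorter than your Fourier decomposition, though both routes are valid. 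Both arguments require $m\le 5$ for the rescaled energy to stay bounded, so neither improves on the dimensional restriction.
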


\begin{proof}
    For simplicity, we will write $\Omega_\eta$ for $B_{\rho}\setminus B_{\rho(1-\eta)}$. For a fixed $\eta >0$,
    the solution $v$ to (\ref{eqn:weneed}) is denoted by $v_\eta$. Since $v_\eta$ is a biharmonic function, we have
    \begin{equation*}
        \int_{\Omega_\eta} \abs{\triangle v_\eta}^2 dx\leq \int_{\Omega_{\eta}} \abs{\triangle u}^2 dx\leq C\eta.
    \end{equation*}
    Here in the last inequality, we used the fact that $u$ is smooth in $\overline{B_{\rho}}$.
    Set $w_\eta=v_\eta-u$. We have
    \begin{equation*}
        \int_{\Omega_\eta} \abs{\triangle w_\eta}^2 dx\leq C\eta
    \end{equation*}
    and
    \begin{equation*}
        \left\{
        \begin{array}[]{ll}
            \triangle^2 w_\eta=\triangle^2 u & \mbox{ in } \Omega_\eta\\
            w_\eta=0 & \mbox{ on } \partial \Omega_\eta\\
            \frac{\partial w_\eta}{\partial n}=0 & \mbox{ on } \partial\Omega_\eta.
        \end{array}
        \right.
    \end{equation*}

    Since $\abs{u}=1$, it suffices for Lemma 4.1 to prove that
    \begin{equation}\label{eqn:tozero}
        \lambda_\eta=\max_{\Omega_{\eta}} \abs{w_\eta}\to 0
    \end{equation}
    as $\eta$ goes to $0$.

    Next, we prove (\ref{eqn:tozero})   by contradiction. If (\ref{eqn:tozero}) is not true, there exists a positive number $\tilde{\delta}>0$, a sequence of $\eta_i\to 0$
  and a sequence of points $p_i\in \Omega_{\eta_i}$ such that
    \begin{equation}\label{eqn:wi}
        \abs{w_{\eta_i}(p_i)}=\lambda_{\eta_i}>\tilde{\delta}.
    \end{equation}
    By a rotation if necessary, we may assume that $p_i=(0,0,0,0, p_i^5)$. Define
    \begin{equation*}
        \tilde{w}_i(\tilde x)=\frac{1}{\lambda_{\eta_i}}w_{\eta_i}(\eta_i \tilde x+p_i).
    \end{equation*}
    Let $\tilde{\Omega}_i$ be the corresponding set defined by
   \[\tilde{\Omega}_i=\{\tilde x\in \Bbb R^5 : \quad \eta_i \tilde x+p_i \in \Omega_{\eta_i} \} \]
   and we write $\tilde{\triangle}$ for the new Laplacian operator in $\tilde x$.
    \begin{equation*}
        \left\{
        \begin{array}[]{ll}
            \tilde{\triangle}^2 \tilde{w}_i =\frac{1}{\lambda_{\eta_i}}\eta_i^4 (\triangle^2 u)(\eta_i \tilde x+p_i) & \mbox{ in } \tilde{\Omega}_i \\
            \tilde{w}_i=0 & \mbox{ on }\partial\tilde{\Omega}_i \\
            \frac{\partial \tilde{w}_i}{\partial n}=0 & \mbox{ on }\partial\tilde{\Omega}_i.
        \end{array}
        \right.
    \end{equation*}
    Moreover,
    \begin{equation*}
      \int_{\tilde{\Omega}_i}\abs{\tilde{\triangle} \tilde{w}_i}^2 d\tilde x \leq C.
    \end{equation*}

    Consider two hypersurfaces $H_1$ and $H_2$ given by
    \begin{equation*}
       H_1:= \{ \tilde{x}\in \Real^5 |\quad \tilde x_5=0\}
    \end{equation*}
    and
    \begin{equation*}
       H_2:= \{ \tilde{x}\in \Real^5 | \quad  \tilde x_5=\rho\}.
    \end{equation*}
    For each large positive $K$, set
    \begin{equation*}
        D_K=\left\{\tilde {x}\in \Real^5|\quad 0\leq \tilde x_1\leq \rho, \sum_{i=2}^5 \tilde x_i^2\leq K^2\right\}.
    \end{equation*}
    We also denote the unbounded domain between $H_1$ and $H_2$ by $D_\infty$.
    There is a sequence of diffeomorphisms
    \begin{equation*}
        \Phi_i:\Real^5\to \Real^5
    \end{equation*}
    such that when $i$ is sufficiently large compared to $K$,

    (1) it maps $D_K$ to a part of the annulus $\tilde{\Omega}_i$ containing the origin in the middle;

    (2)
    \begin{equation*}
        \norm{\Phi_i-\mbox{id}}_{C^4(D_K)}\to 0
    \end{equation*}
    as $i\to \infty$.

    Fix $K$, for $i$ large, set
    \begin{equation*}
        \bar{w}_i=\tilde{w}_i\circ \Phi_i: D_K\to \Real^5.
    \end{equation*}
    Then $\bar{w}_i$ satisfies,
    \begin{equation*}
        \left\{
        \begin{array}[]{ll}
            (\tilde{\triangle}^2 \bar{w}_i\circ \Phi^{-1}_i)\circ \Phi_i = \frac{1}{\lambda_{\eta_i}}\eta_i^4 ({\triangle}^2 u)\circ \Phi_i & \mbox{ in } D_K \\
            \bar{w}_i =0 & \mbox{ on } D_K\cap (H_1\cup H_2) \\
            \pfrac{}{x_5} \bar{w}_i = ((\Phi_i)_* \pfrac{}{x_5}) \tilde{w}_i & \mbox{ on } D_K\cap (H_1\cup H_2).
        \end{array}
        \right.
    \end{equation*}
    Letting $i\to \infty$  and then letting $K\to \infty$, we obtain  a biharmonic function $w$ as a limit of $\bar{w}_i$ such that
    \begin{equation*}
        \left\{
        \begin{array}[]{ll}
            \triangle^2 w=0 & \mbox{ in } D_\infty \\
            w =0 & \mbox{ on } H_1\cup H_2 \\
            \pfrac{}{x_5} w = 0 & \mbox{ on } H_1\cup H_2.
        \end{array}
        \right.
    \end{equation*}
    We claim that from the construction,

    (1)
    \begin{equation*}
        \int_{D_\infty} \abs{\triangle w}^2 dx<C;
    \end{equation*}

    (2) $w$ is bounded but non-zero because $w$ is a limit of $\tilde{w}_i\circ \Phi_i$ and by (\ref{eqn:wi})
    $$\max_{\Omega_i} \tilde{w}_i=\tilde{w}_i(0)=1.$$

    We will see that this is a contradiction. Let $\hat{w}_i$ be a sequence of smooth functions with compact support and the same boundary condition which converges to $w$ in $W^{2,2}$ norm. Hence,
    \begin{eqnarray*}
        0  =  \lim_{i\to \infty} \int_{D_\infty} \triangle^2 w \hat{w}_i dx = \lim_{i\to \infty} \int_{D_\infty}\triangle w\triangle \hat{w}_i dx = \int_{D_\infty} \abs{\triangle w}^2 dx.
    \end{eqnarray*}
    This implies that $w$ is harmonic. It is obvious that a bounded harmonic function with zero Dirichlet boundary condition must be zero, which is a contradiction to (2).
\end{proof}

The following lemma is an elliptic estimate involving the Sobolev
space of fractional order. However,  it is not easily to find a
proper reference, so we will outline a proof. We  denote by
$\norm{\cdot}_{(s)}$  the $W^{s,2}$ Sobolev norm obtained by
complex interpolation if $s$ is not a positive integer.
\begin{lem}
    \label{lem:elliptic}
    Let $u$ be a biharmonic function on $\Omega$. Assume $u$ satisfies the boundary conditions
    \begin{equation*}
        u|_{\partial \Omega}=f,\quad \frac{\partial u}{\partial n}|_{\partial \Omega}=g.
    \end{equation*}
    Then for any $s>0$, there exists a constant $C$ depending on the dimension and $\Omega$ such that
    \begin{equation*}
        \norm{u}_{(s)}\leq C(\norm{f}_{(s-\frac{1}{2})}+\norm{g}_{(s-3/2)}).
    \end{equation*}
\end{lem}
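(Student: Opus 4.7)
The plan is to reduce the inhomogeneous boundary value problem to a homogeneous one by lifting the boundary data into the interior. First I would invoke the classical trace theorem of Lions--Magenes to construct an extension $w\in W^{s,2}(\Omega)$ with $w|_{\partial\Omega}=f$, $\partial w/\partial n|_{\partial\Omega}=g$, and
\[
\norm{w}_{(s)}\leq C(\norm{f}_{(s-1/2)}+\norm{g}_{(s-3/2)}).
\]
Setting $v=u-w$, the difference satisfies $\triangle^2 v=-\triangle^2 w$ in $\Omega$ with zero Dirichlet and Neumann data on $\partial\Omega$, so the problem reduces to bounding $\norm{v}_{(s)}$ by $\norm{\triangle^2 w}_{(s-4)}$ and then by $\norm{w}_{(s)}$.

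Next I would invoke the shift estimate for the biharmonic Dirichlet problem. For integer $s\geq 4$ it is classical that $\triangle^2$ is an isomorphism from the subspace of $W^{s,2}(\Omega)$ with vanishing Dirichlet--Neumann trace onto $W^{s-4,2}(\Omega)$, yielding
\[
\norm{v}_{(s)}\leq C\norm{\triangle^2 w}_{(s-4)}\leq C\norm{w}_{(s)},
\]
which together with the lift estimate proves the lemma in those cases. The case $s=2$ is the variational formulation in $H^2_0(\Omega)$ via Lax--Milgram applied to the bilinear form $\int_\Omega \triangle u\,\triangle\varphi\,dx$, and the range $2\leq s\leq 4$ is filled in by complex interpolation between the integer endpoints.

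For $0<s<2$, the space $W^{s-4,2}(\Omega)$ must be interpreted as the topological dual of $W^{4-s,2}_0(\Omega)$, and the required shift estimate is obtained by duality from the conjugate range $s'=4-s\geq 2$ already handled; complex interpolation then fills in the remaining non-integer values. The main obstacle, and presumably the reason the author mentions that a convenient reference is not easy to locate, is exactly this bookkeeping: one has to check that the Lions--Magenes trace theorem, the biharmonic shift theorem, and complex interpolation combine compatibly on the entire scale $s>0$, particularly across the transition at $s=4$ where $W^{s-4,2}(\Omega)$ passes from a function space to a space of distributions, and one must ensure that the boundary conditions defining the domain of $\triangle^2$ remain well-posed in each regime.
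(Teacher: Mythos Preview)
Your outline is a correct and standard route, but it is genuinely different from what the paper does. The paper proceeds by localization: it covers $\partial\Omega$ by coordinate patches, multiplies $u$ by cut-off functions, and invokes Arkeryd's boundary $L^p$ estimates on each patch to obtain the a priori inequality
\[
\norm{u}_{(s)}\leq C\bigl(\norm{f}_{(s-1/2)}+\norm{g}_{(s-3/2)}+\norm{u}_{(s-1)}\bigr),
\]
with an extra lower-order term $\norm{u}_{(s-1)}$ on the right. It then removes this term by a compactness--contradiction argument, using that the only biharmonic function with homogeneous Dirichlet and Neumann data is zero. Your approach instead lifts the boundary data via Lions--Magenes, reduces to the homogeneous problem, and appeals directly to the isomorphism (shift) property of $\triangle^2$ on the full Sobolev scale, with interpolation and duality to cover non-integer and low $s$. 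The paper's method is more hands-on and needs only one cited local estimate plus a soft compactness step; yours is more structural and avoids the lower-order term altogether, at the price of invoking heavier machinery (trace lifting, interpolation of domains of $\triangle^2$, duality for negative-order spaces). Both ultimately rest on uniqueness for the homogeneous problem---the paper uses it in the contradiction step, you use it implicitly through the isomorphism assertion.
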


\begin{proof}
    For $x\in \partial \Omega$, take two open neighborhoods of $x$, $U,V$ such that
    \begin{equation*}
        x\in V \subset \overline{V}\subset U.
    \end{equation*}
    Assume that $\overline{U}\cap \Omega$ is diffeomorphic to $B_1^+=\{x\in B_1|\, x_1\geq 0\}$. Let $\varphi$ be a smooth  cut-off function
    satisfying (1) $\varphi(y)\equiv 0$ for $y\notin U$; (2) $\varphi(y)\equiv 1$ for $y\in V$. A direct computation implies
    \begin{equation}
        \left\{
        \begin{array}[]{l}
            \triangle^2 (\varphi u)= D^3u \# D\varphi +D^2 u\# D\varphi +Du\#D^3\varphi+D^4\varphi \\
            (\varphi u)|_{\partial \Omega}=\varphi f \\
            \pfrac{}{n}(\varphi u)|_{\partial \Omega} = \varphi g+\pfrac{\varphi}{n}f.
        \end{array}
        \right.
    \end{equation}
    Here $D^k$ means partial derivatives of order $k$ and $D^3u \# D\varphi$ means linear combinations of the product of $D^3u$ and $D\varphi$ and so on.
    By \cite{Ark}, we have
    \begin{equation}\label{eqn:V}
        \norm{u}_{(s),V}\leq C(\varphi) (\norm{u}_{(s-1),U}+\norm{f}_{(s-1/2),\partial\Omega\cap U}+\norm{g}_{(s-3/2),\partial \Omega\cap U}).
    \end{equation}

    Since the boundary $\partial\Omega$ is compact, we can find a finite number of points $x_1,\cdots, x_k$ such that $\partial \Omega$ is covered by $V_i$'s. Adding (\ref{eqn:V}) up for all $V_i$ and using the interior estimate, we obtain
    \begin{equation}\label{eqn:ark}
        \norm{u}_{(s)}\leq C(\norm{f}_{(s-1/2)} +\norm{g}_{(s-3/2)}+\norm{u}_{(s-1)}).
    \end{equation}

    Next, we claim that the $\norm{u}_{(s-1)}$ term in the right hand side is not necessary for our case. This is proved by contradiction. If otherwise, there exists a sequence of $f_i,g_i,u_i$ such that

    (1) $u_i$ is a biharmonic function  on $\Omega$ with $u_i|_{\partial \Omega}=f_i$ and $\frac{ \partial u_i}{\partial n}|_{\partial \Omega}=g_i$;

    (2) assume by scaling that $\norm{f_i}_{(s-1/2)}+\norm{g_i}_{s-3/2}=1$;

    (3) $\norm{u_i}_{(s)}\geq i$.

    It follows from (3) and (\ref{eqn:ark}) that $\lim_{i\to \infty} \norm{u_i}_{(s-1)}=\infty$. Let $\lambda_i$ be $\norm{u_i}_{(s-1)}$ and
    set $\tilde{u}_i=\frac{u_i}{\lambda_i}$, $\tilde{f}_i=\frac{f_i}{\lambda_i}$ and $\tilde{g}_i=\frac{g_i}{\lambda_i}$. Using (\ref{eqn:ark}) again,
    we have that $\norm{u_i}_{(s)}$  is bounded. Therefore, $u_i$ converges weakly in $W^{s,2}(\Omega)$ to a biharmonic function with homogeneous boundary conditions.
    On one hand, due to the compactness of embedding from $W^{s,2}$ to $W^{s-1,2}$, we have $\norm{u}_{(s-1)}=1$. On the other hand,  the only biharmonic function
     with homogeneous boundary conditions is zero. This is a contradiction.
\end{proof}
\begin{prop}\label{thm:2}
    Let $u_i$ be the sequence defined in Theorem 1.3. Then there exists a positive constant $\varepsilon_1$ such that if we set
    \begin{equation*}
        \Sigma_1=\bigcap_{R>0}\{x\in \Omega| \, B_R(x)\subset \Omega, R^{-1}\mu(B_R(x))\geq \varepsilon_1\},
    \end{equation*}
    then
    \begin{equation*}
        \mbox{spt}\, \nu\subset \Sigma_1.
    \end{equation*}
\end{prop}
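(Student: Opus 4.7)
The plan is to prove the contrapositive: if $x_0 \in \Omega$ admits $R > 0$ with $B_R(x_0) \subset \Omega$ and $R^{-1}\mu(B_R(x_0)) < \varepsilon_1$, then $\nu$ vanishes in a neighborhood of $x_0$. By choosing $\varepsilon_1 \leq \varepsilon_0/2$, the $\varepsilon$-regularity part of Theorem \ref{thm:first} applies at every $y \in B_{R/2}(x_0)$ (since $B_{R/2}(y) \subset B_R(x_0)$ and the scaled energy doubles at worst), so the weak limit $u$ is smooth on $B_{R/2}(x_0)$. Following the idea attributed to Lin \cite{Acta}, I would, for each $i$, construct a competitor $\tilde u_i$ that agrees with $u_i$ outside a suitable ball $B_\rho \subset B_{R/2}(x_0)$, equals $u$ on most of $B_\rho$, and is joined to $u_i$ through a thin annulus by biharmonic interpolation, then exploit the minimality of $u_i$ for $\mathbb H_{\varepsilon_i}$.

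First, using Fatou's lemma as in Lemma 3.1 together with the compactness of trace maps coming from $W^{2,2}$, I would select a good slice radius $\rho \in (R/4, R/2)$ such that, along a further subsequence, $u_i|_{\partial B_\rho} \to u|_{\partial B_\rho}$ strongly in $W^{3/2,2}$ and $\nabla u_i|_{\partial B_\rho} \to \nabla u|_{\partial B_\rho}$ strongly in $W^{1/2,2}$, while the boundary integrals $\rho \int_{\partial B_\rho}(|\nabla^2 u_i|^2 + \rho^{-2}|\nabla u_i|^2)\, d\sigma$ remain uniformly bounded.

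Next, for a small parameter $\eta > 0$ to be fixed later, define $\tilde u_i$ piecewise: $\tilde u_i := u_i$ on $\Omega \setminus B_\rho$, $\tilde u_i := u$ on $B_{\rho(1-\eta)}$, and on the annulus $A_\eta := B_\rho \setminus B_{\rho(1-\eta)}$ put $\tilde u_i := v_i/|v_i|$, where $v_i$ solves the biharmonic Dirichlet problem on $A_\eta$ with boundary data $(u_i,\partial_n u_i)$ on $\partial B_\rho$ and $(u,\partial_n u)$ on $\partial B_{\rho(1-\eta)}$. Lemma \ref{lem:weneed} applied to the smooth map $u$ on $B_\rho$, combined with Lemma \ref{lem:elliptic} for the difference $v_i - u$, guarantees $|v_i| \geq 1/2$ on $A_\eta$ for $\eta$ small and $i$ large, and yields a bound
\begin{equation*}
   \int_{A_\eta} |\triangle \tilde u_i|^2\, dx + \varepsilon_i \int_{A_\eta} |\nabla \tilde u_i|^{m+1}\, dx \;\leq\; F(\eta) + o_i(1),
\end{equation*}
with $F(\eta) \to 0$ as $\eta \to 0$. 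Since $\tilde u_i$ coincides with $u_i$ outside $B_\rho$, the minimality of $u_i$ gives
\begin{equation*}
   \int_{B_\rho} |\triangle u_i|^2\, dx + \varepsilon_i \int_{B_\rho} |\nabla u_i|^{m+1}\, dx \;\leq\; \int_{B_{\rho(1-\eta)}} |\triangle u|^2\, dx + F(\eta) + o_i(1).
\end{equation*}
Letting $i \to \infty$ (and invoking Lemma \ref{lem:hong} to drop the perturbation term) and then $\eta \to 0$, I obtain $\mu(B_\rho) \leq \int_{B_\rho} |\triangle u|^2\, dx$. Since $\mu = |\triangle u|^2\, dx + \nu$ with $\nu \geq 0$, this forces $\nu(B_\rho) = 0$, hence $\nu(B_{R/4}(x_0)) = 0$, and so $x_0 \notin \mathrm{spt}\,\nu$.

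The main obstacle I anticipate is the quantitative control of the annular contribution. Lemma \ref{lem:weneed} delivers $|v_i| \geq 1/2$ only for boundary data literally coming from a single smooth $S^n$-valued map defined on the full ball, whereas $v_i$ carries $W^{2,2}$-boundary data from $u_i$ on the outer sphere $\partial B_\rho$ and separately from $u$ on the inner sphere. Bridging this gap through Lemma \ref{lem:elliptic} and the good-slice strong convergence is delicate, and showing that the perturbation contribution $\varepsilon_i \int_{A_\eta} |\nabla \tilde u_i|^{m+1}$ is $o(1)$ requires that the critical integrability exponent $m+1$ combine favorably with the fractional Sobolev trace theory on the $(m-1)$-sphere $\partial B_\rho$; this is precisely the point at which the dimension restriction $m=5$ becomes essential.
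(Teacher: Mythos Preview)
Your overall architecture---contrapositive, good-slice selection, biharmonic interpolation on a thin annulus, then comparison---matches the paper's. But the step you flag as ``delicate'' is in fact a genuine gap, and neither Lemma~\ref{lem:weneed} nor Lemma~\ref{lem:elliptic} closes it.

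The issue is the pointwise lower bound $|v_i|\geq 1/2$ near the \emph{outer} sphere $\partial B_\rho$, where the boundary data come from $u_i$, not from the smooth map $u$. Lemma~\ref{lem:weneed} only controls the biharmonic extension $v$ with data $u$ on \emph{both} spheres; to pass from $v$ to $v_i$ you would need $\|v_i-v\|_{L^\infty(A_\eta)}\to 0$. Lemma~\ref{lem:elliptic} together with a good slice gives at best $v_i\in W^{2.5,2}(A_\eta)$ with $v_i-v\to 0$ there, but in dimension $m=5$ the exponent $2.5=m/2$ is exactly critical and $W^{2.5,2}$ does not embed into $L^\infty$. Strong trace convergence in $W^{3/2,2}\times W^{1/2,2}$ buys you nothing beyond $W^{2,2}$ convergence of $v_i$, which is even further from $L^\infty$. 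So your mechanism for ensuring that $v_i/|v_i|$ is well-defined and $S^n$-valued does not work as stated.

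The paper fills this gap by an entirely different device: it selects the slice so that $\int_{\partial B_\rho}(|\nabla^2 u_i|^2+|\nabla u_i|^4)\,d\sigma\leq C\varepsilon_1$ uniformly in $i$, and then uses \emph{pointwise} Poisson-kernel estimates for the biharmonic Dirichlet problem (from \cite{green}) to bound $|v_i(x)-v_{kr,\xi_0}|$ directly for $x$ in a thin layer $B_\rho\setminus B_{\rho(1-\lambda)}$, where $v_{kr,\xi_0}$ is a local boundary average. The smallness of $\varepsilon_1$---not convergence as $i\to\infty$---is what makes these bounds small; this is why $\varepsilon_1$ must be chosen after the kernel constants and is the real content of the dimension restriction $m=5$ (the kernel decay rates $r^2/d(x,y)^6$ and $r^2/d(x,y)^5$ and the $4$-dimensional sphere measure are balanced precisely in this case). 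Away from the outer boundary, interior estimates plus the smooth inner data give $C^l$ control, completing the $L^\infty$ bound on all of $A_\eta$.

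A second, smaller point: you compare via minimality of $u_i$ for $\mathbb H_{\varepsilon_i}$, which forces you to show $\tilde u_i\in W^{1,m+1}$ and to control $\varepsilon_i\int_{A_\eta}|\nabla\tilde u_i|^{m+1}$. With only $W^{2.5,2}$ control on $v_i$ in five dimensions you get $\nabla v_i\in L^5$, not $L^6$, so this term is not obviously finite. The paper sidesteps this by using instead that $(u_i)$ is a minimizing \emph{sequence} for $\mathbb H$ in $W^{2,2}_{u_0}\cap C^0$ (a consequence of Lemma~\ref{lem:hong}); then one only needs $\tilde u_i\in C^0\cap W^{2,2}$, and the comparison reads $\liminf_i \mathbb H(\tilde u_i)\geq \lim_i \mathbb H(u_i)$ with no perturbation term at all.
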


\begin{proof}
    $\varepsilon_1$ is determined during the proof. We can require that $\varepsilon_1<\varepsilon_0$ such that $u$ is smooth away from $\Sigma_1$.
    Hence, if $x_0\notin \Sigma_1$, then there is an $R>0$ such that $u$ is smooth in $\overline{B_R(x_0)}$ and
    \begin{equation*}
        \liminf_{i\to \infty} R^{-1}\int_{B_R(x_0)}\abs{\triangle u_i}^2 dx< \varepsilon_1.
    \end{equation*}
    It suffices to show that $\nu\equiv 0$ in $B_{R/3}(x_0)$. Assume $x_0$ is the origin and $R=1$.

    It is obvious that
    \begin{equation*}
        \int_{B_{0.9}} \abs{\nabla^2 u_i}^2+\abs{\nabla u_i}^4 dx  \leq C \varepsilon_1.
    \end{equation*}
    Pick $\rho\in (1/2,2/3)$ such that
    \begin{equation}\label{eqn:pick}
        \int_{\partial B_{\rho}}\abs{\nabla^2 u_i}^2 +\abs{\nabla u_i}^4 d\sigma\leq C\varepsilon_1
    \end{equation}
    for infinitely many $i$'s. Assume by taking subsequence that (\ref{eqn:pick}) is true for all $i$.

    Since $\abs{\triangle u_i}^2dx=\mu_i\wc \mu=\abs{\triangle u}^2 dx+\nu$ in $B_{1}$ as Radon measures, to show $\nu\equiv 0$ in $B_{1/3}$, it suffices to show
    \begin{equation*}
        {\mathbb H}(u_i,B_{\rho})\leq {\mathbb H}(u,B_{\rho})+\delta
    \end{equation*}
    for any $\delta>0$ and all sufficiently large $i$'s.

    To do so, we use the fact that $u_i$ is a minimizing sequence in $W^{2,2}_{u_0}(\Omega,S^n)\cap C^0(\Omega,S^n)$. We shall construct a new sequence $\{\tilde{u}_i\}$ in $W^{2,2}_{u_0}(\Omega,S^n)\cap C^0(\Omega,S^n)$ such that

    (a) $\tilde{u}_i=u_i$ on $\Omega\setminus B_{\rho}$;

    (b) $\tilde{u}_i=u$ on $B_{\rho(1-\eta)}$ for a very small $\eta$ to be determined in the following proof;

    (c)
    \begin{equation*}
        \int_{B_{\rho}\setminus B_{\rho(1-\eta)}}\abs{\triangle \tilde{u}_i}^2 dx\leq C(\eta)<\delta.
    \end{equation*}

    Given this new sequence of $\tilde{u}_i$, by the definition of minimizing sequence,
    \begin{equation*}
        \liminf_{i\to \infty} {\mathbb H}(\tilde{u}_i)\geq \lim_{i\to \infty} {\mathbb H}(u_i).
    \end{equation*}
    Due to (a), (there is no guarantee that $\lim_{i\to \infty} \int_{B_{\rho}}\abs{\triangle u_i}^2 dx$ exists, but we can always take a subsequence such that this is true. This does not affect the result that $\nu=0$.)
    \begin{equation*}
        \liminf_{i\to \infty} {\mathbb H}(\tilde{u_i},B_{\rho})\geq \lim_{i\to \infty} {\mathbb H}(u_i,B_{\rho}).
    \end{equation*}
    Therefore, for all sufficiently large $i$,
    \begin{equation}\label{eqn:smallenergy}
        {\mathbb H}(u_i,B_{\rho})\leq {\mathbb H}(u,B_{\rho})+C(\eta).
    \end{equation}

    The above discussion shows that Theorem \ref{thm:2} follows from a construction of $\tilde{u}_i$ satisfying (a), (b) and (c).
    Due to (a) and (b), it suffices to define $\tilde{u}_i$ in $B_{\rho}\setminus B_{\rho(1-\eta)}$. The construction consists of several steps.

    {\bf Step one.} Let $v$ be the solution of the boundary value problem
    \begin{equation}\label{eqn:eta}
        \left\{
        \begin{array}[]{ll}
            \triangle^2 v=0 & \mbox{ in } B_{\rho}\setminus B_{\rho(1-\eta)}; \\
            v=u & \mbox{ on } \partial B_{\rho}\cup \partial B_{\rho(1-\eta)}; \\
            \frac{\partial v}{\partial n}=\frac{\partial u}{\partial n} & \mbox{ on } \partial B_{\rho}\cup \partial B_{\rho(1-\eta)}.
        \end{array}
        \right.
    \end{equation}
    Here we require $\eta$ to be smaller than the $\eta_1$ given by Lemma \ref{lem:weneed}. There will be another restriction to $\eta$ in Step five. The point is that $\eta$ doesn't depend on $i$.

{\bf Step two.} Define $v_i$ as the biharmonic extension of $u$ and $\tilde{u}_i$ as follows.
    \begin{equation*}
        \left\{
        \begin{array}[]{ll}
            \triangle^2 v_i=0 & \mbox{ in } B_{\rho}\setminus B_{\rho(1-\eta)}; \\
            v=u & \mbox{ on } \partial   B_{\rho(1-\eta)}; \\
            v=u_i & \mbox{ on } \partial B_{\rho}; \\
            \frac{\partial v}{\partial n}=\frac{\partial u}{\partial n} & \mbox{ on }  \partial B_{\rho(1-\eta)};\\
            \frac{\partial v}{\partial n}=\frac{\partial u_i}{\partial n} & \mbox{ on } \partial B_{\rho}.
        \end{array}
        \right.
    \end{equation*}
    We need to prove some estimates of $v_i$. Recall that both $u_i$ and $u$ are bounded in $W^{2,2}$(in fact $u$ is smooth.)
    The restriction of $W^{2,2}$ function to a hypersurface belongs to $W^{1.5,2}$ and
    \begin{equation*}
      \pfrac{u_i}{n}\in W^{0.5,2}(\partial B_{\rho}).
    \end{equation*}
    By Lemma \ref{lem:elliptic}, we have
    \begin{eqnarray}\label{eqn:w22}
      &&\norm{v_i}_{{W}^{2,2}(B_{\rho}\setminus B_{\rho(1-\eta)})}\\ \nonumber
      &\leq& C( \norm{u_i}_{W^{1.5,2}(\partial B_{\rho})}+\norm{\pfrac{u_i}{n}}_{W^{0.5,2}(\partial B_{\rho})}) +C(u)\\ \nonumber
      &\leq& C \norm{u_i}_{W^{2,2}(B_{\rho}\setminus B_{\rho(1-\eta)})}+C(u) \\ \nonumber
      &\leq& C(u).
    \end{eqnarray}

    Moreover, we can obtain better estimate if we take into account the special choice of $\rho$. Due to (\ref{eqn:pick}), we have
    \begin{equation*}
      \norm{u_i}_{W^{2,2}(\partial B_{\rho})}\leq C(\varepsilon_1)
    \end{equation*}
    and
    \begin{equation*}
      \norm{\partial_n u_i}_{W^{1,2}(\partial B_{\rho})}\leq C(\varepsilon_1).
    \end{equation*}
    This combined with the fact that $u$ is smooth implies that (by Lemma \ref{lem:elliptic} again)
    \begin{equation}\label{eqn:step2}
        \norm{v_i}_{{W}^{2.5,2}(B_{\rho}\setminus B_{\rho(1-\eta)})}\leq C(n,\varepsilon_1,u).
    \end{equation}

    {\bf Step three.} We need to use the Poisson formula to show that there exists a thin layer given by $B_{\rho}\setminus B_{\rho(1-\lambda)}$ for some $\lambda<< \eta$ such that the image of $v_i$ stay near the sphere in this layer.

For simplicity, we may assume without loss of generality that $\rho=1$ and $\eta=1/2$ (in this step only). Since $\rho$ and $\eta$ is fixed, this doesn't affect the proof. We will show there exists a small $\lambda>0$ such that
\begin{equation*}
  \abs{v_i}>1/2
\end{equation*}
on
\begin{equation*}
  B_{1}\setminus B_{1-\lambda}.
\end{equation*}
According to Green's formula for the biharmonic equation (\cite{green}),
\begin{equation}
  v_i(x)= \int_{\partial B_{1/2}\cup \partial B_{1}} K_0(x,y) v_i(y) d\sigma_y
 +\int_{\partial B_{1/2}\cup \partial B_{1}} K_1(x,y) \partial_n v_i(y) d\sigma_y.
  \label{eqn:poisson}
\end{equation}
Following \cite{hly}, set $\xi_0=\frac{x}{\abs{x}}$ and $r=1-\abs{x}$. Since we will only consider estimate near $\partial B_1$, we may require $x\in B_1\setminus B_{4/5}$. Therefore, $r$ is the distance from $x$ to $\partial (B_1\setminus B_{1/2})$.
Here is an estimate on $K_0$ and $K_1$ from \cite{green},
\begin{equation}
  \abs{K_0(x,y)}\leq C \frac{r^2}{d^6(x,y)}
  \label{eqn:K0}
\end{equation}
and
\begin{equation}\label{eqn:K1}
    \abs{K_1(x,y)}\leq C\frac{r^2}{d^5(x,y)},
\end{equation}
for $y\in \partial B_1\cup \partial B_{1/2}$ and $x\in B_1\setminus B_{4/5}$.
For some $k>1$ with $kr\leq \frac{1}{4}$, we write
\begin{equation*}
    v_{kr,\xi_0}=\frac{1}{\abs{\partial B_{1}\cap B_{kr}(\xi_0)}}\int_{\partial B_{1}\cup B_{kr}(\xi_0)} v_i d\sigma.
\end{equation*}
Using the Poincar\'e inequality, we see
\begin{eqnarray}\label{eqn:poincare}
    &&\frac{1}{\abs{\partial B_{1}\cap B_{kr}(\xi_0)}}\int_{\partial B_{1}\cap B_{kr}(\xi_0)} \abs{v_i-v_{kr,\xi_0}}d\sigma \\ \nonumber
    &\leq& C\norm{\nabla u_i}_{L^{4}(\partial B_{1}\cap B_{kr}(\xi_0))}\leq C  \varepsilon_1^{1/4}.
\end{eqnarray}
Hence $\abs{v_{kr,\xi_0}-1}\leq C\varepsilon_1^{1/4}$.
Since the constant function $v_{kr,\xi_0}$ is a biharmonic function with constant Dirichlet boundary value and zero Neumann boundary value, we have
\begin{eqnarray}\label{eqn:poisson2}
  v_i(x)-v_{kr,\xi_0}&=&  \int_{\partial B_{1/2}\cup \partial B_{1}} K_0(x,y) (v_i(y)-v_{kr,\xi_0}) d\sigma_y + \\
  \nonumber &&
 \int_{\partial B_{1/2}\cup \partial B_{1}} K_1(x,y) \partial_n v_i(y) d\sigma_y.
\end{eqnarray}
To estimate the first integral, we divide the integral domain into two parts,
\begin{equation*}
    \Omega_1=\partial B_{1}\cap B_{kr}(\xi_0) \mbox{ and } \Omega_2=(\partial B_{1}\setminus B_{kr}(\xi_0))\cup \partial B_{1/2}.
\end{equation*}
$\Omega_2$ is further divided into two parts. Then we estimate
\begin{eqnarray*}
     & &
     \int_{\Omega_2}\abs{K_0(x,y)(v_i-v_{kr,\xi_0})}d\sigma_y\\
    &\leq& \left( \int_{\Omega_2\cap B_{1/2}(\xi_0)}+\int_{\Omega_2\setminus B_{1/2}(\xi_0)}\right) \abs{K_0(x,y)(v_i-v_{kr,\xi_0})}
    d\sigma_y.
\end{eqnarray*}
For $y\in \Omega_2\setminus B_{1/2}(\xi_0)$, we note
\begin{equation*}
    \abs{K_0(x,y)}\leq Cr^2.
\end{equation*}
Hence,
\begin{equation}\label{eqn:f1}
    \int_{\Omega_2\setminus B_{1/2}(\xi_0)} \abs{K_0(x,y)(v_i-v_{kr,\xi_0})}d\sigma_y\leq Cr^2.
\end{equation}
Using the fact that $v_i$ and $v_{kr,\xi_0}$ are bounded, we have
\begin{eqnarray*}\label{eqn:f2}
    \int_{\Omega_2\cap B_{1/2}(\xi_0)} \abs{K_0(x,y)(v_i-v_{kr,\xi_0})} d\sigma_y &\leq& \int_{kr}^{1/2} \frac{Cr^2}{t^6} t^3dt \\
    &\leq& \frac{C}{k^2}-Cr^2.  \nonumber
\end{eqnarray*}
Here $t$ is the distance between $\xi_0$ and $y$ on the sphere $\partial B_1$ and we estimate $d(x,y)$ from below by $Ct$. We add (\ref{eqn:f1}) and (\ref{eqn:f2}) to get
\begin{equation*}
    \int_{\Omega_2}\abs{K_0(x,y)(v_i-v_{kr,\xi_0})}d\sigma_y\leq Cr^2+\frac{C}{k^2}.
\end{equation*}
For all $y\in \partial B_1\cup \partial B_{1/2}$ and $x\in
B_1\setminus B_{4/5}$, we see
\begin{equation*}
    d(x,y)\geq r.
\end{equation*}
If $y$ is a point in $\Omega_1$,  by (\ref{eqn:poincare}), we have
\begin{eqnarray*}
    \abs{\int_{\partial B_{1}\cap B_{kr}(\xi_0)}K_0(x,y)(u_i-v_{kr,\xi_0})}&\leq& \frac{C}{r^4}\int_{\partial B_{1}\cap B_{kr}(\xi_0)}\abs{u_i-v_{kr,\xi_0}}d\sigma_y \\
    &\leq& Ck^4 \varepsilon_1^{1/4}.
\end{eqnarray*}
The second integral in (\ref{eqn:poisson2}) is estimated similarly.
\begin{eqnarray*}
    \abs{\int_{\Omega_2}K_1(x,y)\partial_n v_i \sigma_y}&\leq& \left(\int_{\Omega_2\cap B_{1/2}(\xi_0)}+\int_{\Omega_2\setminus B_{1/2}(\xi_0)}\right) \abs{K_1(x,y)\partial_n v_i}d\sigma_y \\
    &\leq& \left( \int_{\Omega_2\cap B_{1/2}(\xi_0)}\abs{K_1}^{4/3} \right)^{3/4} \left( \int_{\Omega_{2}\cap B_{1/2}(\xi_0)} \abs{\partial_n v_i}^4 \right)^{1/4} \\
    &&+ Cr^2\int_{\Omega_2\setminus B_{1/2}(\xi_0)} \abs{\partial_n v_i}d\sigma_y \\
    &\leq& C \left( r^{8/3} \int_{kr}^{1/2} \frac{1}{t^{20/3}}t^3dt \right)^{3/4}+Cr^2 \\
    &\leq& C\left( \frac{1}{k^{8/3}}-Cr^{8/3} \right)^{3/4} +Cr^2 \\
    &\leq& \frac{C}{k^2}+Cr^2,
\end{eqnarray*}
where we used (\ref{eqn:K1}), (\ref{eqn:pick}) and the H\"older inequality.
On the other hand,
\begin{eqnarray*}
    \abs{\int_{\Omega_1}K_1(x,y){\partial_n u_i}}d\sigma &\leq& \frac{C}{r^3}\int_{\Omega_1}\abs{\partial_n u_i}d\sigma_y \\
    &\leq& Ck^3\left( \int_{\Omega_1} \abs{\partial_n u_i}^4 d\sigma_y \right)^\frac{1}{4}\leq Ck^3 \varepsilon_1^{1/4}.
\end{eqnarray*}
In summary, we have
\begin{equation*}
    \abs{v_i(x)-v_{kr,\xi_0}}\leq \frac{C}{k^2}+C\varepsilon_1^{1/4}(k^4+k^3).
\end{equation*}
We can choose $k$ large so that $\frac{C}{k^2}<\frac{1}{10}$ and then choose $\varepsilon_1$ small so that $C\varepsilon_1^{1/4}(k^4+k^3) + 1-\abs{v_{kr,\xi_0}}<\frac{1}{10}$. Hence, if we set $\lambda=\frac{1}{4k}$, we have
\begin{equation*}
    \abs{v_i}\geq \frac{3}{4},
\end{equation*}
for any point $x\in B_{1}\setminus B_{1-\lambda}$ with $r<\lambda$.

{\bf Step four.} We will establish an estimate of $v_i$ on
    \begin{equation*}
        B_{\rho(1-\lambda/2)}\setminus B_{\rho(1-\eta)}.
    \end{equation*}
    Due to the interior estimate for biharmonic functions and (\ref{eqn:w22}) in Step two,
    \begin{equation*}
        \norm{v_i}_{C^l(B_{\rho(1-\lambda/2)}\setminus B_{\rho(1-\eta+\lambda/2)})}\leq C(l).
    \end{equation*}
    Given this, the elliptic boundary value problem on $B_{\rho(1-\eta/2)}\setminus B_{\rho(1-\eta)}$ implies
    \begin{equation*}
        \norm{v_i}_{C^l(B_{\rho(1-\eta/2)}\setminus B_{\rho(1-\eta)})}\leq C(l,u),
    \end{equation*}
    since both boundary values are now very smooth.

    Combining the result of Step three with the result of Step four, we see that for $i$ sufficiently large, the image of $v_i$ stay in the neighborhood of $S^n$, so
     we define
    \begin{equation*}
        \tilde{u}_i=\frac{v_i}{\abs{v_i}}
    \end{equation*}
    on $B_{\rho}\setminus B_{\rho(1-\eta)}$.

    {\bf Step five.} It remains to check (\ref{eqn:smallenergy}).
    \begin{eqnarray*}
        {\mathbb H}(\tilde{u}_i,B_{\rho}\setminus B_{\rho(1-\eta)}) &\leq& C{\mathbb H}(v_i,B_{\rho}\setminus B_{\rho(1-\eta)}) \\
        &\leq & 2C {\mathbb H}(v,B_{\rho}\setminus B_{\rho(1-\eta)}) \\
        &\leq &  2C {\mathbb H}(u,B_{\rho}\setminus B_{\rho(1-\eta)}) \\
      &\leq& C(u) \eta.
    \end{eqnarray*}
    Here for the second inequality above, we apply (\ref{eqn:step2}) and the argument in Step four to show the energy of $v_i$ converges to that of $v$.
   Thus, we  make this smaller than $\delta$ if we   choose $\eta$ smaller than $\eta_2(u,\delta)>0$.
\end{proof}

We are now ready to prove Theorem \ref{thm:third}:

\begin{proof}[Proof of Theorem   \ref{thm:third}]
    The proof about finite $1-$dimension Hausdorff measure of the singular set $\Sigma_1$ is standard (see the proof of Theorem 3.4 in \cite{Scheven}). To show $\Sigma_1$ is relatively closed, let $x_i$ be a sequence in $\Sigma_1$ such that $x_i\to x\in \Omega$. Let $R>0$ be such that $B_R(x)\subset \Omega$, it suffices to show
    \begin{equation}\label{eqn:closed}
        R^{4-m}\mu (B_R(x))\geq \varepsilon_1.
    \end{equation}
    Pick any $r<R$. For $i$ sufficiently large, $B_r(x_i)\subset B_R(x)\subset \Omega$. Hence
    \begin{equation*}
        r^{4-m}\mu(B_r(x_i))\geq \varepsilon_1.
    \end{equation*}
    This implies that
    \begin{equation*}
        R^{4-m}\mu (B_R(x))\geq \left( \frac{R}{r} \right)^{4-m} r^{4-m} \mu(B_{r}(x_i))\geq \left( \frac{R}{r} \right)^{4-m}\varepsilon_1.
    \end{equation*}
    Since $r$ can be arbitrarily close to $R$, (\ref{eqn:closed}) is true, hence (1) is proved.

    By Theorem \ref{thm:first} and Proposition \ref{thm:2}, we
    obtain
    \begin{equation*}
        \mbox{spt}\, \nu\cup \mbox{sing}\, u\subset \Sigma_1.
    \end{equation*}
    If $x\notin \mbox{spt}\,\nu \cup \mbox{sing}\, u$, there is $R>0$ such that
    \begin{equation*}
        B_R(x)\cap \mbox{sing}\, u =\emptyset
    \end{equation*}
    and
    \begin{equation*}
        \nu(B_R(x))=0.
    \end{equation*}
    Hence, for $r<R$,
    \begin{equation*}
        r^{-1}\mu(B_r(x))=r^{-1}\int_{B_r(x)}\abs{\triangle u}^2 dx.
    \end{equation*}
    By the smoothness of $u$ in $B_{R/3}(x)$,
    \begin{equation*}
        r^{-1}\mu(B_r(x))\to 0
    \end{equation*}
    when $r$ goes to zero. This implies $x\notin \Sigma_1$, which proves (2).

    For (3), Lemma \ref{lem:41} implies that $\mu$, hence $\nu$ is absolutely continuous with respect to $\mathcal H^{1}\llcorner \Sigma_1$. By the Radon-Nikodym theorem, one has
    \begin{equation*}
        \mu|_{\Sigma_1}=\Theta(x)\mathcal H^{1}\llcorner \Sigma_1,
    \end{equation*}
    for $\mathcal H^1-$a.e. $x\in \Sigma_1$. By Corollary 3.2.3 in \cite{Ziemer},
    \begin{equation*}
        \nu|_{\Sigma_1}=\Theta(x)\mathcal H^{1}\llcorner \Sigma_1,
    \end{equation*}
    for $\mathcal H^1-$a.e. $x\in \Sigma_1$. The estimates of $\Theta(x)$ follows
    from the fact that
    \begin{equation*}
        \varepsilon_0\leq r^{-1}\mu(B_r(x))\leq C,
    \end{equation*}
    for $\mathcal H^{1}-$a.e. $x\in \Sigma_1$.
Thus (1) and (2) is proved.

   For (3),
according to Preiss, it suffices to show that for $\nu$ almost every $x$,
\begin{equation*}
    0<\lim_{r\to 0} r^{-1}\nu(B_r(x))<\infty.
\end{equation*}
This is nothing but Theorem \ref{thm:15}. By Theorem \ref{thm:15},
we have
\begin{equation*}
    \lim_{r\to 0}r^{-1}\mu(B_r(x))
\end{equation*}
exists except for a set of $\mathcal H^1$ measure zero. Since $\nu$ is absolutely continuous with respect to $\mathcal H^1\llcorner \Sigma_1$, this is true for $\nu-$a.e. $x\in \Sigma_1$.
\end{proof}

\begin{acknowledgement}
 {The research  of the authors was supported by the Australian Research Council
grant DP0985624.}
\end{acknowledgement}

\end{document}